\newcommand{\R}{\mathbb{R}}
\newcommand{\C}{\mathbb{C}}
\newcommand{\mc}[1]{\mathcal{#1}}
\newcommand{\mb}[1]{\mathbb{#1}}
\newcommand{\B}[1]{\boldsymbol{#1}}
\newcommand{\Bt}[1]{\widetilde{\boldsymbol{#1}}}
\newcommand{\T}[1]{\boldsymbol{\mathcal{#1}}}
\newcommand{\TA}[1]{\overrightarrow{\boldsymbol{\mathcal{#1}}}}
\newcommand{\Th}[1]{\widehat{\boldsymbol{\mathcal{#1}}}}
\newcommand{\Tt}[1]{\widetilde{\boldsymbol{\mathcal{#1}}}}
\newcommand{\bdiag}[1]{\mathsf{bdiag}\left(#1\right)}
\newcommand{\bmat}[1]{\begin{bmatrix}#1\end{bmatrix}}
\newcommand{\starM}{\star_{\B{M}}}
\newcommand{\range}{\mathsf{range}}
\newcommand{\unfold}[1]{\mathsf{unfold}(#1)}
\renewcommand{\t}{^*}
\newcommand{\fold}[1]{\mathsf{fold}(#1)}
\newcommand{\tensor}[1][]{\@tensor[#1]}
\def\@tensor[#1] (#2,#3) #4; {{ 

\pgfkeys{/tensor/.cd,#1}

\def\depthScale{0.5} 

\pgfmathsetmacro{\numSlicesMinusOne}{\nSlices-1}
\pgfmathsetmacro{\numSlicesPlusOne}{\nSlices+1}


\pgfmathsetmacro{\sliceLength}{\myScale*\dimOne}

\ifthenelse{\equal{\sliceType}{lateral}}
	{
	
	\pgfmathsetmacro{\sliceWidth}{\myScale*\sWidth*0.9*\dimTwo/\nSlices}
	\pgfmathsetmacro{\sliceGap}{\myScale*\dimTwo/(\nSlices-1) - \nSlices*\sliceWidth/(\nSlices-1)}
	\pgfmathsetmacro{\sliceDepth}{\myScale*\dimThree}
	
	} 
	{
	\ifthenelse{\equal{\sliceType}{frontal}}
		{
		
		\pgfmathsetmacro{\sliceDepth}{\myScale*\sWidth*0.9*\dimThree/\nSlices}
		\pgfmathsetmacro{\sliceGap}{\myScale*\dimThree/(\nSlices-1) - \nSlices*\sliceDepth/(\nSlices-1)}
		\pgfmathsetmacro{\sliceWidth}{\myScale*\dimTwo}
	
		}
		{
		\pgfmathsetmacro{\sliceWidth}{\myScale*\dimTwo}
		\pgfmathsetmacro{\sliceDepth}{\myScale*\dimThree}
		}

	}

\def\xFront{#2 + \xShift}	
\def\yFront{#3 + \yShift}
\def\xBack{#2 + \xShift + \depthScale*\sliceDepth + \xSpec*\sliceDepth}
\def\yBack{#3 + \yShift + \depthScale*\sliceDepth + \ySpec*\sliceDepth}



\def\aFront{(\xFront, \yFront)}
\def\bFront{(\xFront, \yFront + \sliceLength)}
\def\cFront{(\xFront + \sliceWidth, \yFront + \sliceLength)}
\def\dFront{(\xFront + \sliceWidth, \yFront)}

\def\aBack{(\xBack, \yBack)}
\def\bBack{(\xBack, \yBack + \sliceLength)}
\def\cBack{(\xBack + \sliceWidth, \yBack + \sliceLength)}
\def\dBack{(\xBack+ \sliceWidth, \yBack)}

\ifthenelse{\NOT\equal{\myFill}{nofill}}
	{
	\def\tempTensor{
		\fill[\myFill!25] \bFront -- \bBack -- \cBack -- \cFront -- cycle; 
		\fill[\myFill!75] \dFront -- \dBack -- \cBack -- \cFront -- cycle; 
		\fill[\myFill!50] \aFront rectangle \cFront;  
	
		\draw \aFront rectangle \cFront; 
		\draw \bFront -- \bBack; 
		\draw \cFront -- \cBack;
		\draw \dFront -- \dBack;
	
		\draw \bBack -- \cBack;
		\draw \cBack -- \dBack;
		}
	}
	{ 

	\def\tempTensor{
		\draw \aFront rectangle \cFront; 
		
		\ifthenelse{\NOT\equal{\myBack}{0}}
		{
			\draw[dashed] \bBack -- \aBack -- \dBack;
		}{}
		
		\draw \dBack -- \cBack -- \bBack;

		\ifthenelse{\NOT\equal{\myBack}{0}}
		{
			\draw[dashed] \aFront -- \aBack;
		}{}
		
		\draw \bFront -- \bBack;
		\draw \cFront -- \cBack;
		\draw \dFront -- \dBack;
		}
	}


\ifthenelse{\equal{\sliceType}{lateral}}
	{
	\foreach\sliceCount in {0,...,\numSlicesMinusOne}
		{	
		\begin{scope}[shift ={(\sliceCount*\sliceWidth + \sliceCount*\sliceGap, 0)}]
			\tempTensor;
		\end{scope}
		}
	
	}
	{
	
	\ifthenelse{\equal{\sliceType}{frontal}}
	{
	
	\pgfmathsetmacro{\xStep}{\sliceDepth/2 + \sliceGap/2 + \myScale*\dimThree*\xSpec/(\nSlices-(1-\sWidth))}
	\pgfmathsetmacro{\yStep}{\sliceDepth/2 + \sliceGap/2 +  \myScale*\dimThree*\ySpec/(\nSlices-(1-\sWidth))}
	
	\foreach\sliceCount in {-\numSlicesMinusOne,...,0}
		{	
		
		\begin{scope}[shift = {(-\sliceCount*\xStep, -\sliceCount*\yStep)}]
			\tempTensor;
		\end{scope}
	
		}
	
	}
	{
	\tempTensor;
	}
	
	}


\node at (#2 + \dimTwo/2, #3 + \dimOne/2) {#4};

}} 
\title{A tensor-based dynamic mode decomposition based on the $\starM$-product\thanks{Submitted to the editors, DATE. \funding{The work was supported, in part, by the Department of Energy through the awards DE-SC0025262 and DE-SC0025415. }}}
\author{Arvind K.\ Saibaba\thanks{Department of Mathematics, North Carolina State University
  (\email{asaibab@ncsu.edu}, 
  \url{https://asaibab.math.ncsu.edu/}).}\and Misha E.\ Kilmer\thanks{Department of Mathematics, Tufts University (\email{misha.kilmer@tufts.edu}, \url{https://sites.tufts.edu/mishaekilmer})}\and Khalil Hall-Hooper\thanks{Department of Mathematics, North Carolina State University
  (\email{khalil.a.hallhooper@protonmail.com})}\and Fan Tian\thanks{Department of Mathematics, Tufts University (\email{fan.tian@tufts.edu})} \and Alex Mize\thanks{Department of Mathematics, North Carolina State University
  (\email{abmize@ncsu.edu})}}
\begin{document}

\maketitle

\begin{abstract}
    Dynamic mode decomposition (DMD) is a data-driven method for estimating the dynamics of a discrete dynamical system. This paper proposes a tensor-based approach to DMD for applications in which the states can be viewed as tensors. Specifically, we use the $\starM$-product framework for tensor decompositions which we demonstrate offers excellent compression compared to matrix-based methods and can be implemented in a computationally efficient manner. We show how the proposed approach is connected to the traditional DMD and physics-informed DMD frameworks. We give a computational framework for computing the tensor-based DMD and detail the computational costs. We also give a randomized algorithm that enables efficient $\starM$-DMD computations in the streaming setting. The numerical results show that the proposed method achieves equal or better accuracy for the same storage compared to the standard DMD on these examples and is more efficient to compute.   
\end{abstract}

\begin{MSCcodes}
15A69, 65F99, 93B30
\end{MSCcodes}

\section{Introduction}

Dynamical systems are found in many applications in science and engineering. We consider discrete dynamical systems of the form 
\begin{equation}
\B{x}_{t+1} = \B{F}(\B{x}_t)\qquad t=0,1,2,\dots,
\end{equation}
where $\B{x}_t \in \R^{N}$ denote the states of the system, and $\B{F}$ governs the dynamics of the system. The approach in Dynamical Mode Decomposition (DMD)~\cite{colbrook2023multiverse,kutz2016dynamic} is to suppose that there is a matrix $\B{A}$ such that $\B{x}_{t+1} \approx \B{A}\B{x}_t$ for $ t \ge 0$. To this end, in the regression viewpoint of DMD~\cite[Section 2.2.1]{colbrook2023multiverse}, we solve a least-squares problem of the form 
\begin{equation}\label{eqn:dmdlstsq}
\min_{\B{A} \in \R^{N\times N}} \| \B{Y} - \B{AX}\|_F^2,
\end{equation}
where the matrices $\B{X} = \bmat{\B{x}_0 & \dots & \B{x}_{t-1}} $ and $\B{Y} = \bmat{\B{x}_1 & \dots & \B{x}_{t}}$ are formed from the snapshots.  The solution of this least-squares problem is given by 
\[ \B{A}_{\rm DMD} = \B{YX}^\dagger,\]
where $\dagger$ denotes the Moore-Penrose inverse~\cite[Section 5.5.2]{golub2012matrix}. 
The idea is that once $\B{A}$ is determined, its orthogonal decomposition would enable one to better understand the state dynamics.  
In practice, however,  a truncated SVD of the matrix $\B{X}$ is computed and a Galerkin approximation of the matrix $\B{A}_{\rm DMD}$ is then formed using the subspace spanned by the leading $r$ left singular vectors of the matrix $\B{A}$. This is reviewed in Section \ref{ssec:dmd}.

In applications, the states $\B{x}_t$ are discrete representations of states in two or three spatial dimensions. For example, in fluid dynamics, the states represent a physical quantity such as velocity or vorticity over a two- or three-dimensional space. A disadvantage of DMD is that by vectorizing the states, the approach disregards the spatial structure present in the data.  This suggests that additional structure is present in the problem that is going undiscovered and underutilized. 

A tensor is a multiway array. To avoid vectorizing the states, we can represent them in a tensor, where the modes/dimensions represent the two or three space dimensions and time.  We posit that a tensor-based DMD approach can handle the inherent multidimensional structure of the problem.  However, in order to build such an approach, we need to consider which compressed tensor representation to employ.  

In applications involving data compression, there are many possible tensor decompositions one could use. Examples include the CP, Tucker, and Tensor Train (TT) decomposition (see \cite{ballard2025tensor} for descriptions and references).  However, the aforementioned decompositions do not give a straightforward way to translate the DMD machinery---which is inherently operator based---to tensors (see Related Work at the end of this section for exceptions).  
Instead, we build our tensor-based DMD approach in the tensor $\starM$-product framework \cite{kernfeld2015tensor, kilmer2021tensor}.  
There are several advantages of working in the $\starM$-product framework:
\begin{itemize}
\item  the $\starM$-SVD has provably superior compression properties compared to the matrix representation~\cite{kilmer2021tensor},
\item the $\starM$-product provides a natural way to define a pseudoinverse, enabling a closed-form least-squares solution to a regression problem, analogous to~\eqref{eqn:dmdlstsq},
\item computations in the $\starM$ have inherent parallelism that can be exploited to obtain speedups.
\end{itemize}

 We propose to view the state vectors $\B{x}_t$ as lateral slices $\T{X}_t$ and approximate the dynamical system as $\T{X}_{t+1} \approx \T{A} \starM \T{X}_{t}$, where $\T{A}$ is a tensor to be determined, and the $\starM$ product and related framework is described in detail in Section \ref{ssec:starM}. 
 Next, we consider the streaming setting, in which the snapshots (states) of the dynamical system arrive sequentially in batches and are to be discarded after processing. 
 We show how to update the $\starM$-DMD modes in this streaming setting in a way that avoids recomputing expensive decompositions from scratch.

\medskip
\paragraph{Contributions} 
Specific contributions of this work are summarized as follows:  
\begin{enumerate}
    \item We give a framework for DMD using the $\starM$ product (Section~\ref{sec:tDMD}). In this framework, the states are not viewed as vectors, but lateral slices of a tensor.  We describe how to solve a tensor regression problem to obtain the tensor that maps the current state to the next. 
    \item We explain the connection between $\starM$-DMD to traditional matrix-based DMD (Section~\ref{sec:connections}), thus grounding our method in the Koopman formalism. We show that $\starM$-DMD can be viewed as a matrix regression problem where the operator $\B{A}$ is constrained to lie within a subspace defined by the $\starM$-algebra. 
    \item We give a computational framework for computing $\starM$-DMD (Section~\ref{sec:tDMD}). 
    Specifically, we define a Schur decomposition under the $\starM$ product. Compared with traditional, matrix-based DMD, we show that our proposed approach allows for parallelism and is more computationally efficient.
    \item  To achieve maximum computational efficiency, we propose a new randomized algorithm for $\starM-$SVD and $\starM$-DMD in the streaming setting (Section~\ref{sec:streaming}). The proposed method is storage efficient, only requiring storing two sketches, as well as computationally efficient. We also provide a probabilistic analysis for the $\starM$-SVD algorithm, that may be of independent interest beyond this paper. 
    \item We demonstrate the performance of our approach on several numerical examples from fluid dynamics (Section~\ref{sec:experiments}). For the same storage cost, the methods based on the $\starM$-product perform at least as well or better compared to the standard DMD. 
\end{enumerate}

\medskip
\paragraph{Related work} 
Since the initial paper on DMD by~\cite{schmid2010dynamic}, there have been several papers on DMD; see e.g., review articles~\cite{colbrook2023multiverse,  schmid2022dynamic}. There have been a few attempts to use tensors in the context of DMD. An approach for DMD using the TT-decomposition was proposed in~\cite{klus2018tensor}. In this approach, the snapshot matrices $\B{X}$ and $\B{Y}$ are reshaped into tensors and then compressed in the TT-format. In the formula $\B{YX}^\dagger$,  the pseudoinverse $\B{X}^\dagger$ is computed as the pseudoinverse of the matrix-unfolding. Two major differences compared to the present paper are: (1) the use of a different tensor format, and resulting tensor algebra, and (2) a different model for the dynamics. 
The recent papers~\cite{zhang2023multivariate,he2025tensor} introduced tensor DMD using the t-product (which is a special case of the $\starM$-framework). However, our paper goes beyond the extension to $\starM$-product in that it uses different compression schemes, explains the connections to standard DMD, and is adapted to the streaming setting. We expand on these points further.

Although our approach is tensor-based, we show that it has a close connection to traditional, matrix-based DMD and can be viewed from the lens of a regression viewpoint of DMD~\cite[Section 2.2.1]{colbrook2023multiverse}. We will also show that it has close connections with the physics informed DMD~\cite{baddoo2023physics}.

Our paper also addresses DMD in the streaming setting. This was first considered~\cite{hemati_dynamic_2014}. Another approach for updating the DMD was proposed in~\cite{zhang2019online}. But unlike our scheme, these approaches are not randomized.  Randomized matrix-based DMD algorithms were proposed in~\cite{ahmed_dynamic_2025,erichson2019randomized}, but to our knowledge they have not been applied to the streaming setting or for tensors. The present paper addresses these gaps.

\section{Background}\label{sec:background}

 \subsection{Linear Algebra background}

Let $\B{K} \in \C^{n\times n}$. We say that $\B{K} = \B{WTW}\t$ is a Schur form if $\B{W}$ is unitary and $\B{T}$ is upper triangular with the eigenvalues of $\B{K}$ as the diagonal elements of $\B{T}$. Even if $\B{K}$ is not be diagonalizable, the Schur form is guaranteed to exist. However, the form is not unique, since the diagonals can be chosen to appear in any order. Compared to the Jordan canonical form, it is more numerically stable to compute, since it employs unitary similarity transformations.   

Given a matrix $\B{A} \in \C^{m\times n}$, we denote the (full) SVD of $\B{A} = \B{U\Sigma V}^*$. If we truncate the SVD to a target rank $k \le \mathsf{rank}(\B{A})$, then we denote the truncated SVD $\B{A}\approx \B{U}_k\B{\Sigma}_k\B{V}_k^*$.

\subsection{Dynamic mode decomposition}\label{ssec:dmd} Consider a dynamical system represented by 
\[ \B{x}_{t+1} = \B{F}(\B{x}_t) \qquad 0 \le t < T,\]
where the states $\B{x}_t \in \Omega \subset \C^N $ for $0 \le t \le T$ and $\B{F}$ is the evolution operator. 
Dynamic mode decomposition posits that there is a matrix $\B{A}$ such that $\B{x}_{t+1} \approx  \B{Ax}_t$, and estimates $\B{A} \in \C^{N\times N}$ by solving the regression problem 
\begin{equation}\label{eqn:linreg}
    \min_{\B{A} \in \C^{N\times N}} \|\B{Y}-\B{AX}\|_F^2,
\end{equation}
where the snapshot matrices $\B{X} = \bmat{\B{x}_0 & \dots & \B{x}_{T-1}}$ and $\B{Y} = \bmat{\B{x}_1 & \dots & \B{x}_{T}}$. The minimum norm solution to this regression problem is given by $\B{A}_{\rm DMD} = \B{YX}^\dagger$ where $\B{X}^\dagger$ represents the Moore-Penrose inverse of $\B{X}$. In practice, a common approach is to truncate $\B{X}$ to rank-$r$ before applying the pseudoinverse. Assume that the target rank is $k \le \min\{N,T\}$ and we compute the approximation $\B{X} \approx \B{U}_k\B\Sigma_k\B{V}_k\t$. Then, we compute the matrix $\B{K} = \B{U}_k\t \B{Y}\B{V}_k\B\Sigma_k^{-1}$ and assuming it diagonalizable, we compute its eigendecomposition $\B{K} = \B{W\Lambda W}^{-1}$. We take the approximate eigenvectors of $\B{A}$ to be $\B\Phi = \B{U}_k\B{W}$ with corresponding eigenvalues $\B\Lambda$. Note that we have the approximation
\begin{equation}\label{eqn:galerkin} \B{A}_{\rm DMD} = \B{YX}^\dagger \approx \Bt{A}_{\rm DMD} \equiv \B{U}_k\B{U}_k\t \B{YX}^\dagger \B{U}_k\B{U}_k\t= \B{U}_k \B{KU}_k\t.  \end{equation}
Here $\Bt{A}_{\rm DMD}$ is an approximation to $\B{A}_{\rm DMD}$ obtained from Galerkin projection and $(\B\Lambda,\B\Phi)$ are the Ritz pairs. 
This process is outlined in Algorithm \ref{alg:exactdmd}.

\begin{algorithm}[!ht]
\caption{Exact DMD from \cite{schmid2010dynamic}}
\label{alg:exactdmd}
\begin{algorithmic}[1]
\REQUIRE Matrices $\B{X} = \bmat{\B{x}_0 & \dots & \B{x}_{T-1}}$ and $\B{Y} = \bmat{\B{x}_1 & \dots & \B{x}_{T}}$ in $\mathbb{C}^{N\times T}$; Target rank $k$; Assumption: $N>T$ and $k\leq \min\{N,T\}$.
\STATE $[\B{U},\B{\Sigma},\B{V}] = \mathsf{svd}(\B{X}, \mathsf{econ})$;
\STATE $\B{U}_{k} = \B{U}(:,1\!:\!k)$; $\B{\Sigma}_{k} = \B{\Sigma}(1\!:\!k,1\!:\!k)$; $\B{V}_{k} = \B{V}(:,1\!:\!k)$;
\STATE Compute $\B{K} = \B{U}_{k}^{*} \B{Y} \B{V}_{k} \B{\Sigma}_{k}^{-1}$;
\STATE $[\B{W}_{k},\B{\Lambda}_{k}] = \mathsf{eig}(\B{K})$;
\STATE $\B{\Phi}_{k} = \B{U}_{k}\B{W}_{k}$;
\RETURN $\B{\Phi}_{k} \in \mathbb{C}^{N\times k}$ (DMD modes) and $\B{\Lambda}_{k} \in \mathbb{C}^{k\times k}$ (DMD eigenvalues).
\end{algorithmic}
\end{algorithm}

To represent the states $\{\B{x}_t\}_{t=0}^T$, we consider the sequence of approximations 
\begin{equation*}
    \B{x}_{t} \approx \B{A}^t_{\rm DMD}\B{x}_0 \approx \Bt{A}_{\rm DMD}^t\B{x}_0, \qquad t \in \{0,\ldots,T \}.
\end{equation*}
The final expression $\Bt{A}^t\B{x}_0 = \B\Phi\B\Lambda^t \B\alpha$, where $\B\alpha \equiv \B\Phi^\dagger \B{x}_0 = \B{W}^{-1}\B{U}_k\t\B{x}_0$ follows from the properties of eigendecomposition. This choice of $\B\alpha$ is motivated by the solution to the least squares problem $\min_{\B\alpha \in \C^k}\|\B{x}_0- \B\Phi\B\Lambda^0\B\alpha\|_2^2 $. More generally, one can also obtain $\B\alpha$ by solving the least squares problem 
\[ \min_{\B\alpha \in \C^k} \sum_{j=0}^{T}\|\B{x}_j- \B\Phi\B\Lambda^j\B\alpha\|_2^2.\]
This is a structured least squares problem, for which there is a closed form expression for the optimal $\B\alpha$; see~\cite{jovanovic2014sparsity}. 

In practice, the matrix $\B{K}$ is not guaranteed to be diagonalizable. Even if it is diagonalizable, it may be nonnormal with a highly ill-conditioned eigenvector matrix $\B{W}$. In such cases, the eigendecomposition has numerical stability issues, which can be mitigated with by using the Schur decomposition~\cite{drmavc2023data}. We replace the eigendecomposition with the Schur decomposition, $\B{K} = \B{WTW}^*$ where $\B{W}$ is unitary and $\B{T}$ is upper-triangular, with the eigenvalues of $\B{K}$ as its diagonal entries. In our numerical implementation, we replace the eigendecomposition in step 4 of Algorithm~\ref{alg:exactdmd} with the Schur decomposition. 

\subsection{Tensor Notation}
We denote a third-order tensor as $\T{X} \in \C^{m \times p \times n}$ with entries $x_{ijk}$. There are three types of fibers: row fibers $\T{X}(i,:,k)$, column fibers $\T{X}(:,j,k)$, and tube fibers $\T{X}(i,j,:)$ for appropriate $i,j,k$. The frontal slices of $\T{X}$ are denoted $\T{X}(:,:,k)$ for $1 \le k \le n$, the lateral slices as $\T{X}(:,j,:)$ for $1\le j \le p$, and the horizontal slices as $\T{X}(i,:,:)$ for $1 \le i \le m$. The Frobenius norm of $\T{X}$ is $\|\T{X}\|_F = \left( \sum_{ijk} |x_{ijk}|^2 \right)^{1/2}$. 

The mode-$j$  unfolding of the third order tensor $\T{X}$ is denoted $\B{X}_{(j)}$ for $j=1,2,3$.   
These are $m \times (p \cdot n)$, $p \times (m \cdot n)$ and $n \times (p \cdot m)$ matrix reorganizations, respectively, of the data as `fat' matrices. 
In particular, $\B{X}_{(3)}$
is the $n \times (p\cdot m)$ matrix denoted as $\B{X}_{(3)}$ formed by slicing $\T{X}$ laterally into $m \times n$ matrices, and then transposing and stacking those matrices side-by-side, left to right.  
The mode-3 product between $\T{X}$ and a
$q \times n$ matrix $\B{A}$ will result in a tensor $\T{Y} \in 
\mathbb{C}^{m \times p \times q}$ and is expressed as $\T{Y} := \T{X}\times_3\B{A}$.  
It is computed according to $\B{Y}_{(3)}=\B{AX}_{(3)}$. 

Define an unfolding function $\unfold{\T{X}} = \B{X}_{(2)}\t \in \mathbb{C}^{mn \times p}$ that maps a tensor to a matrix and its inverse operation $\fold{\B{X}_{(2)}\t} = \T{X}$, that maps the the matrix to the corresponding tensor. Note $\unfold{\Th{X}} = (\B{M}\otimes \B{I})\unfold{\T{X}}$. Also, define the block diagonal matrix 
\[\bdiag{\T{X}} = \bmat{ \T{X}(:,:,1)\\ & \ddots\\ && \T{X}(:,:,n) } \in \C^{(mpn)\times(mpn) }. \]

\subsection{Star-M product}\label{ssec:starM}
The $\starM$-product framework was outlined in a sequence of papers \cite{kilmer2011factorization, kernfeld2015tensor,kilmer2021tensor}.  In this subsection, we review the definitions from those papers that are relevant to our tensor-based DMD formulation described in Section \ref{sec:tDMD}.

Given an invertible matrix $\B{M} \in \C^{n\times n}$, we can define a tensor $\T{X}$ in the so-called {\it transform domain} as $\Th{X} = \T{X}\times_3\B{M}$. Similarly, $\T{X} = \Th{X}\times_3\B{M}^{-1}$, and we refer to $\T{X}$ as living in the {\it standard domain}.   
With this in mind, the $\starM$ product of 
tensors is well-defined: 
\begin{definition}
Given $\T{A} \in \C^{m\times p \times n}$ and $\T{B} \in \C^{p\times s\times n}$, we can define the $\starM$-product $\T{C} = \T{A}\starM\T{B} \in \C^{m \times s\times n}$ by first computing the transform domain versions $\Th{A}$ and $\Th{B}$, and computing the frontal slice product through the standard matrix multiplication as 
\[ \Th{C}(:,:,k) = \Th{A}(:,:,k)\Th{B}(:,:,k)\qquad 1 \le k \le n. \]
Finally, $\T{C} = \Th{C}\times_3\B{M}^{-1}$.
\end{definition}

Many properties we list below extend to invertible $\B{M}$. However, from this point onward, we assume that $\B{M}$ is unitary.  This choice of course implies that the inverse is the conjugate transpose, making it easy/stable to move between the transform space and the standard domain.  

\begin{definition} If $\T{A} \in \C^{p \times p \times n}$, the identity tensor $p \times p \times n$ is defined through the operation 
$\T{A} \starM \T{I} = \T{A} = \T{I} \starM \T{A}$. It can be constructed in the transform domain as $\Th{I}(:,:,j) = \B{I}$ for $1 \le j \le n$ and $\T{I} = \Th{I} \times \T{M}^{-1}$. 
\end{definition}

\begin{definition} The conjugate transpose of $\T{A}$ is the tensor $\T{A}^*$, defined in the transform domain as $(\Th{A}^*)(:,:,j) = \Th{A}(:,:,j)^*$ for $1 \le j \le n$.  The tensor $\T{Q} \in \C^{m\times m \times n}$ is $\starM$-unitary if $\T{Q}\t\starM\T{Q} = \T{Q}\starM\T{Q}\t = \T{I}$. The tensor $\T{Q} \in \C^{m \times p \times n}$ has $\starM$-orthogonal slices if $\T{Q}\t \starM \T{Q} = \T{I}\in \C^{p \times p \times n}$.
\end{definition}

\begin{definition} 
Suppose $\T{A} \in \C^{m \times p \times n}$. The pseudo-inverse $\T{A}^\dagger \in \C^{p \times m\times n}$ is obtained by first converting $\T{A}$ to the transform domain $\Th{A} =\T{A}\times_3\B{M}$, and defining a tensor $\T{X}\in \C^{p \times m \times n}$ slicewise $\T{X}(:,:,i)  = \Th{A}(:,:,i)^\dagger$. Finally, we can define $\T{A}^\dagger = \T{X}\times_3 \B{M}^*$. 
\end{definition}

One of the best features of the $\starM$ product framework is that it admits a matrix-mimetic singular value decomposition:
\begin{theorem}[\cite{kilmer2021tensor}]
Let $\T{A}\in \mathbb{C}^{m \times p \times n}$ and let $\B{M}\in \mathbb{C}^{n \times n}$ be invertible. Then there exists a (full) $\starM$-SVD of $\T{A}$ given by
\begin{equation}
\T{A} = \T{U}\starM\T{S}\starM\T{V}\t = \sum_{i=1}^{r} \T{U}_{:,i,:}\starM\T{S}_{i,i,:}\starM\T{V}^{H}_{:,i,:},
\end{equation}
where $\T{U} \in \mathbb{C}^{m \times m \times n}$ and $\T{V} \in \mathbb{C}^{p \times p \times n}$ are $\starM$-unitary, $\T{S} \in \mathbb{C}^{m \times p \times n}$ has each of its frontal slices as diagonal (called f-diagonal), and $r\leq \min\{m,p\}$ is the number of nonzero tube fiberss in $\T{S}$ denoted $\B{s}_j$ for $1\le j \le \min\{m,p \}$. Moreover, $\|\B{s}_1\|_F^2 \ge \|\B{s}_2 \|_F^2 \cdots \ge \|\B{s}_{\min\{m,p\}}\|_F^2.$
\end{theorem}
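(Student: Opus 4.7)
The approach is to lift everything into the transform domain, apply a slice-wise matrix SVD, and pull back. Set $\Th{A} = \T{A} \times_3 \B{M}$. For each index $k \in \{1,\ldots,n\}$, compute a full matrix SVD $\Th{A}(:,:,k) = \Bh{U}_k \Bh{\Sigma}_k \Bh{V}_k^*$, where $\Bh{U}_k \in \C^{m\times m}$ and $\Bh{V}_k \in \C^{p\times p}$ are unitary and $\Bh{\Sigma}_k \in \C^{m \times p}$ is diagonal. Assemble these into tensors by stacking frontal slices: $\Th{U}(:,:,k) = \Bh{U}_k$, $\Th{S}(:,:,k) = \Bh{\Sigma}_k$, $\Th{V}(:,:,k) = \Bh{V}_k$. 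Finally pull back to the standard domain via $\T{U} = \Th{U} \times_3 \B{M}^*$, $\T{S} = \Th{S} \times_3 \B{M}^*$, $\T{V} = \Th{V} \times_3 \B{M}^*$ (recall $\B{M}$ is assumed unitary).

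The first claim, $\T{A} = \T{U} \starM \T{S} \starM \T{V}^*$, is immediate by the definition of the $\starM$-product: computing $\T{U} \starM \T{S} \starM \T{V}^*$ goes to the transform domain and multiplies frontal slices, which by construction yields $\Bh{U}_k \Bh{\Sigma}_k \Bh{V}_k^* = \Th{A}(:,:,k)$ for every $k$, and then pulling back recovers $\T{A}$. Next, $\T{U}$ and $\T{V}$ are $\starM$-unitary because in the transform domain $\Th{U}^*(:,:,k)\Th{U}(:,:,k) = \B{I}$ for all $k$, which is exactly $\Th{I}$, and conjugating by $\B{M}^*$ gives $\T{U}^* \starM \T{U} = \T{I}$ (the other side being analogous, and similarly for $\T{V}$). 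For the f-diagonal property of $\T{S}$, observe that the mode-3 product acts on tube fibers only: $\T{S}(i,j,:) = \B{M}^* \Th{S}(i,j,:)$. Since $\Th{S}(i,j,:)$ is the zero tube whenever $i \neq j$, so is $\T{S}(i,j,:)$. Hence every frontal slice of $\T{S}$ remains diagonal.

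It remains to verify the ordering of the singular tubes. By construction, the $j$-th singular tube in the transform domain is $\Bh{s}_j = (\sigma_j(\Th{A}(:,:,1)),\ldots,\sigma_j(\Th{A}(:,:,n)))^\top$, where $\sigma_j$ denotes the $j$-th largest singular value. Because $\B{M}$ is unitary, $\|\B{s}_j\|_F = \|\Bh{s}_j\|_F$, and
\begin{equation*}
\|\Bh{s}_j\|_F^2 = \sum_{k=1}^n \sigma_j(\Th{A}(:,:,k))^2 \;\ge\; \sum_{k=1}^n \sigma_{j+1}(\Th{A}(:,:,k))^2 = \|\Bh{s}_{j+1}\|_F^2,
\end{equation*}
using the standard ordering of matrix singular values. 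The bound $r \le \min\{m,p\}$ is trivial since $\T{S}$ has at most $\min\{m,p\}$ nonzero diagonal tubes in the transform domain, and nonzero tubes in the standard domain correspond bijectively to nonzero tubes in the transform domain (again because $\B{M}$ is invertible).

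The main subtlety, and essentially the only nontrivial point, is to justify that f-diagonality passes from the transform domain to the standard domain. This is exactly where it matters that the pullback $\Th{S} \times_3 \B{M}^*$ acts only along the third mode, so diagonal support in the $(i,j)$ plane is preserved tube-by-tube; it is not a statement about linear combinations of full frontal slices but about where the support of the tube fibers lives. Everything else reduces mechanically to the slice-wise matrix SVD, the unitarity of $\B{M}$, and the bookkeeping of the $\starM$-product definitions given earlier in the excerpt.
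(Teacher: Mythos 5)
Your proof is correct and is essentially the construction the paper (and the cited reference \cite{kilmer2021tensor}) relies on: facewise matrix SVDs in the transform domain, reassembled into $\Th{U},\Th{S},\Th{V}$ and pulled back along mode 3, with the tube-ordering obtained by summing squared singular values across faces. The one caveat is that the theorem header says $\B{M}$ is merely invertible, while your step $\|\B{s}_j\|_F = \|\Bh{s}_j\|_F$ genuinely requires $\B{M}$ unitary (or a nonzero scalar multiple thereof); since the paper adopts unitary $\B{M}$ as a standing assumption immediately before the theorem, this is consistent in context, but for general invertible $\B{M}$ the ordering claim should be read in the transform domain.
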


The \textbf{multirank} of $\T{A}$ under $\starM$ is the vector $\B{\rho}$ such that its $i$-th entry $\rho_i$ denotes the rank of the $i$-th frontal slice of $\Th{A}$; that is, $\rho_i=\text{rank}(\Th{A}_{:,:,i})$. 
The number, $r$, of non-zero diagonal tubes $\B{s}_i$ in $\T{S}$ is the t-rank.  Note
that  $r = \max_{1 \le i \le n} \rho_i$.

In practice, it is more convenient to work with the thin-SVD of $\T{A}$  given by $\T{A} = \T{U}\starM\T{S}\starM\T{V}\t$, where now $\T{U} \in \C^{m \times q \times n}$, $\T{S}\in \C^{q \times q \times n}$ and $\T{V} \in \C^{p \times q \times n}$ and $q = \min\{m,p\}$. 

The fact that the Frobenius norm energy is non-increasing was used in \cite{kilmer2021tensor} to show that {\it truncating the $\starM$ SVD to $k < r$ terms gives an optimal approximation in the Frobenius norm}.  In the next section, we review the methods for truncating the $\starM$-SVD in an optimal way, as these truncation techniques are a key feature of our $\starM$-DMD approach.

\subsection{Algorithms} \label{ssec:algs}
Factorization of tensors under $\starM$ are defined and computed in the transform domain.  For example, it is possible to get a skinny QR factorization of a tensor by following Algorithm \ref{alg:QR}. 

\begin{algorithm}[!ht]
\caption{$\starM$-based Tensor QR}
\label{alg:QR}
\begin{algorithmic}[1]
\REQUIRE  $\T{A} \in \mathbb{C}^{m \times p \times n}$
\STATE $\widehat{\T{A}} = \T{A} \times_3 \B{M}$
\STATE $[\B{Q},\B{R}] = {\tt qr}(\widehat{\T{A}}(:,:,i),0)$ 
\STATE $\widehat{\T{Q}}(:,:,i)=\B{Q}; \widehat{\T{R}}(:,:,i)=\B{R}$;
\STATE $\T{Q} := \widehat{\T{Q}} \times_3 \B{M}^{*}$, $\T{R} := \widehat{\T{R}} \times_3 \B{M}^{*}$. 
\RETURN $\T{Q}, \T{R}$ such that $\T{A} = \T{Q} \starM \T{R}$.  
\end{algorithmic}
\end{algorithm}

Next, we briefly review two algorithms 
for truncated $\starM$-SVDs of the tensor $\T{A} \in \C^{m\times p \times n}$. 
Similar to the tensor QR in Algorithm \ref{alg:QR}, we operate on independent slices of $\Th{A}$.  The difference between the two methods has to do with the truncation procedure. 

\paragraph{\bf Truncated tSVDM Approximation (tr-tSVDM)} Suppose we compute a rank-$k$ approximation to the $j$th frontal slice of $\Th{A}$:
\[ \Th{A}(:,:,j) \approx \B{U}_{k}\B\Sigma_{k} \B{V}_{k}^*   \] 
for some truncation index $k \le \min(m,p)$. 
Define the tensors $\Th{U} \in \C^{m\times k \times n}$, $\Th{V}\in \C^{p \times k \times n}$, and $\Th{S} \in \C^{k\times k \times n}$ and populate their frontal slices as 
\[ \Th{U}(:,1\!:\!k,j)= \B{U}_k, \qquad
 \Th{S}(1\!:\!k,1\!:\!k,j)=\B\Sigma_k, \qquad
 \Th{V}(:,1\!:\!k,j)=\B{V}_k, \qquad 1\le j \le n.\]
Then, we convert back to the standard domain via the operations $\T{U}_k = \Th{U} \times_3 \B{M}^{*},$ $\T{S}_k = \Th{S} \times_3 \B{M}^{*},$ $\T{V}_k = \Th{V} \times_3 \B{M}^{*}$.  This gives the truncated $\starM$-SVD approximation $\T{A} \approx \T{A}_k = \T{U}_k \starM \T{S}_k \starM \T{V}_k^*, $ where the latter has t-rank $k$.  
By the Eckart-Young theorem~\cite[Theorem 3.8]{kilmer2021tensor}, this is an optimal t-rank approximation to $\T{A}$ in the Frobenius norm.

\paragraph{\bf Truncated tSVDMII Approximation (tr-tSVDMII)} Alternatively, 
 we can allow for different values of truncations $k_j$ with the understanding that some of the frontal slices of $\widehat{\T{A}}$ might have singular values that are much larger than other faces. Here,
 \[ \Th{U}(:,1\!:\!k_j,j) = \B{U}_{k_j}, \quad \Th{S}(1\!:\!k_j,1\!:\!k_j,j) = \B\Sigma_{k_j}, \quad \Th{V}(:,1\!:\!k_j,j) = \B{V}_{k_j} \qquad 1 \le j \le n.\] 
In order to be able to write this decomposition in the standard domain, we define $k=\max_{j} k_j$. For any face $j$ with $k_j < k$, we imagine padding with zeros (e.g. $\Th{U}(:,k_j\!+\!1\!:\!k,:) = {\bf 0}_{m \times (k-k_j)}$, etc.). Then $\T{U}_k, \T{S}_k, \T{V}_k$ are determined as in tr-tSVDM.  Note that this zero padding is {\it only for the convenience} of being able to write the decomposition in the standard domain.  In practice, all computations are done in the transform domain, which respects the potentially different values of $k_j$ along the faces.   
 The idea is to 
{\bf globally order} the $\hat{\sigma_{i}}^j$.
We keep only the largest $k_j$ of each face such that  
the relative error satisfies $\|\T{A} -\T{A}_{\B{k}}\|_F\le \sqrt{1 - \gamma}\|\T{A}\|_F$. The details of this approach are given in~\cite[Algorithm 3]{kilmer2021tensor}.  This approach is called tSVDMII in \cite{kilmer2021tensor}.  For clarity, we refer to it as tr-tSVDMII here.  

\paragraph{Remark}  Throughout the remainder of the paper, we will always be working with either of these truncated $\starM$-SVD approximations.  Thus, we will drop the $k$ subscript for the remainder, for ease of notation.

\subsection{Computational Cost} \label{ssec:cc}
The computational costs of both methods---tr-tSVDM and tr-tSVDMII---are the same in the asymptotic flop count if we assume that the full matrix SVDs are first computed for each face, then truncated. That is, we count the cost for computing the (thin) $\starM$-SVD for both truncation methods as the dominant cost.  We need the following notation: $T_{\B{M}}$ is the cost of a matrix-vector product (matvec) with $\B{M}$ and $\B{M}^*$ (both are $n\times n$). If $\B{M}$ is the discrete Fourier transfor (DFT) matrix, for example, then because we can use a fast Fourier transfor (FFT) to apply it, $T_{\B{M}} = \mc{O}(n\log n)$ flops.  For a general, dense unitary matrix, the cost would be $\mc{O}(n^2)$ flops.

The cost of transforming an $m \times p \times n$ tensor to and from the transform domain is $mpT_{\B{M}}$ flops, since we apply $\B{M}$ or $\B{M}^*$ to $mp$ fibers. The cost of an economy SVD for each frontal slice is $\mc{O}(mp\min\{p,m\})$ flops. Since there are $n$ slices, the total cost is $\mc{O}(mnp\max\{p,m\})$ flops. The total cost of the $\starM$ SVD is 
\[ T_{\rm tSVDM} = \mc{O}(mnp\min\{p,m\}) + 2mpT_{\B{M}} \> \text{flops}.\]
In comparison, if the SVD is used on the matrix $\B{A} \in \C^{(mn)\times p}$ obtained by unfolding, then the cost of SVD is $\mc{O}(mnp\min\{mn, p\})$ flops. If $m > p$, then the costs of the $\starM$-SVD and matrix SVD are comparable. However, one advantage of the $\starM$-SVD approach is the computational benefits in a parallel environment. Assume that we have a shared memory setting and there are $q$ processors. Then, the computations involving converting to transform domain and back, are perfectly parallelizable and the overall cost is $T_{\rm tSVDM}/q$ flops.

 \subsection{Factored-form Conversion to \texorpdfstring{$\starM$}{}-SVD format}\label{ssec:conversion}
 Suppose a tensor $\T{A}$ is defined as $\T{A} = \T{B} \starM \T{C}$ where $\T{B} \in \C^{m \times k \times n}$ and $\T{C}\t \in \C^{p \times k \times n}$ where $k \ll \min\{m,p\}$, where only $\T{B}, \T{C}$ are known. The slicewise rank of $\Th{A}$ is at most $k$. We want to find a $\starM$-SVD of $\T{A}$ {\bf without} first explicitly forming $\T{A}$.  This technique will be useful in the streaming setting of our algorithm.
 
 To do this, we first compute thin-$\starM$-QR factorizations $\T{B} = \T{Q}_B \starM \T{R}_B $ and $\T{C} = \T{Q}_C \starM \T{R}_C$. Then, we form $\T{T} := \T{R}_B \starM \T{R}_C\t $ and compute its $\starM$-SVD $\T{T} = \T{U}_T \starM \T{S} \starM \T{V}_T\t$. Then we have the $\starM$-SVD of $\T{A}$ as 
 \[ \T{A} = \T{Q}_B \starM \T{R}_B \starM \T{R}_C \starM \T{Q}_C\t = \T{U} \starM \T{S} \starM \T{V}\t, \]
 where $\T{U} = \T{Q}_B \starM \T{U}_T$ and $\T{V} = \T{Q}_C \starM \T{V}_T$. If additional truncation is necessary, it can be accomplished by truncating based on the singular values of $\Th{S}$. 

 Ignoring the cost to move in and out of the transform domain, the computational cost  of this approach is $\mc{O}(n(mk^2 + pk^2 + k^3))$ flops. 
 
\section{\texorpdfstring{$\starM$}{} DMD approach}  \label{sec:tDMD} 
In this section, we give an outline of the proposed $\starM$-DMD approaches  
and we outline the storage and computational costs of the proposed algorithms.

\subsection{Tensor Regression}\label{ssec:tenreg}
Let the state vectors $\{\B{x}_t\}_{t=0}^T$ represent the states which form the data matrices $\B{X}$ and $\B{Y}$ in $\C^{N \times {T}}$.  We can reshape the data matrices $\B{X}$ and $\B{Y}$ into tensors $\T{X}$ and $\T{Y}$ respectively  in $\C^{m \times p \times n}$ depending on the number of spatial dimensions. For two spatial dimensions, $N= n_xn_y$, so we take $m = n_x$, $p = T$, and $n = n_y$. Also, each lateral slice of $\T{X}$, denoted $\TA{X}_t \in \C^{m \times 1 \times n}$, represents the reshaped state $\B{x}_{t-1}$ for $1 \le t \le T$ (similarly, $\T{Y}$). Note that $\unfold{\T{Y}} = \B{Y}$ and $\unfold{\T{X}} = \B{X}$. A possible extension to three spatial dimensions is discussed in Section~\ref{sec:conc}.

We now discuss the tensor-based approach using the $\starM$-product. In the exact $\starM$-DMD approach, we can then approximate the state $\TA{X}_{t+1} \approx \T{A}\starM\TA{X}_t$  for $0\le t< T$. The tensor $\T{A}\in \C^{m \times m\times n}$ can be estimated by solving the regression problem:
\begin{equation}\label{eqn:tensorreg}  \min_{\T{A} \in \C^{m\times m \times n}} \| \T{Y} - \T{A}\starM \T{X}\|_F^2.\end{equation}
 The resulting least squares solution is $\T{A}_{\rm StarMDMD} = \T{Y}\starM\T{X}^\dagger$, where $\T{X}^\dagger$ was defined in Section~\ref{sec:background}.  

\subsection{\texorpdfstring{$\starM-$}{}Schur} Similar to the Schur decomposition in the matrix case, we can define the Schur decomposition for a tensor $\T{K} \in \C^{k  \times k \times n}$using the $\starM$-product. We convert $\T{K} $ to the transform domain to get $\Th{K}$. Next, we compute the Schur decomposition of the frontal slices as $\Th{K}(:,:,i) = \Th{W}(:,:,i) \Th{T}(:,:,i)\Th{W}(:,:,i)\t$ for $1 \le i \le n$. Finally, we obtain $\T{W} = \Th{W} \times_3 \B{M}\t$ and $\T{T} = \Th{T}\times_3 \B{M}\t$. This gives the decomposition $\T{K} = \T{W}\starM \T{T}\starM \T{W}\t$, where $\T{W}$ is $\starM$-unitary and $\T{T}$ has upper-triangular frontal slices. The computation of the Schur decomposition requires $\mc{O}(n k^3) + T_{\B{M}}k^2$ flops. As with the $\starM$-SVD, the computations can be accelerated in a shared memory parallel computation.

\subsection{Tensor DMD}\label{ssec:starmdmd} We proceed as in the exact DMD. We compute the truncated $\starM$-SVD $\T{X} \approx \T{U}\star_{\B{M}} \T{S}\star_{\B{M}} \T{V}^*$. 
This can be computed using the tr-tSVDM (i.e. $k_i=k$) or tr-tSVDMII as described in Section~\ref{ssec:algs}.

Next, we compute the tensor $\T{K} = \T{U}^* \star_{\B{M}} \T{Y} \star_{\B{M}} \T{V} \star_{\B{M}} \T{S}^\dagger $ and its Schur decomposition $\T{K} = \T{W} \star_{\B{M}} \T{T} \star_{\B{M}} \T{W}^* $. 
Note that, described in the standard domain, $\T{K}$ is $k \times k \times n$, with $k=\max_{1 \le j\le n} k_j$ if the tr-tSVDMII is used.  In practice, if the tr-SVDMII was used, only the leading $k_j \times k_j$ submatrix of $\Th{K}(:,:,j)$ will be non-zero, resulting in $O(k_j)^3$ flops per face to find the $\starM$-Schur decomposition.

Finally, we have the approximation 
\[  \T{A}_{\rm StarMDMD} \approx \T{Z}\star_{\B{M}} \T{T} \star_{\B{M}}\T{Z}^*, \qquad \T{A}_{StarMDMD} \in \mathbb{C}^{m \times m \times p}, \]
where $\T{Z} = \T{U}\star_{\B{M}} \T{W}$ determines the $k$ (approximate) modes and  the facewise upper-triangular tensor $\T{T}$ determines the eigenvalues. The procedure is described in Algorithm~\ref{alg:starmdmd}. Note that the computations are written in the standard domain in $\starM$-product form for ease of notation. However, in practice, we convert to the transform domain once at the start of the computations, perform the bulk of the computations in the transform domain, and return to the standard domain at the end of the computations. This is reflected in the storage and cost analysis in Section~\ref{ssec:costs}. 

\begin{algorithm}[!ht]
\begin{algorithmic}[1]
    \REQUIRE Tensors $\T{X}$ and $\T{Y}$ of size $m \times p\times n$ and unitary matrix $\B{M} \in \C^{n\times n}$
    \STATE Compute a {\it truncated} $\starM$-SVD $\T{X} \approx \T{U}\star_{\B{M}} \T{S}\star_{\B{M}} \T{V}^*$ \COMMENT{Either using tr-tSVDM or tr-tSVDMII.}
    \STATE Compute $\T{K} = \T{U}^* \star_{\B{M}} \T{Y} \star_{\B{M}} \T{V} \star_{\B{M}} \T{S}^\dagger $
    \STATE Compute the $\starM$-Schur decomposition $\T{K} = \T{W} \star_{\B{M}} \T{T} \star_{\B{M}} \T{W}^* $
    \STATE Compute the modes $\T{Z} = \T{U} \ast_{\B{M}} \T{W}$
    \RETURN Modes $\T{Z}$ and tensor containing eigenvalues $\T{T}$
\end{algorithmic}
    \caption{Outline of Tensor $\starM$ DMD}
    \label{alg:starmdmd}
\end{algorithm}

\subsection{Representation of States} We now discuss how to approximate the states from the DMD modes and eigenvalues. Suppose $\T{T}^t = \T{T} \starM \dots \starM \T{T}$ is defined in the usual way, with $t$ is a positive integer. Then, we can approximate the states as $\TA{X}_{t} \approx \T{Z}\starM \T{T}^t\starM \TA{G}$ for $0 \le t \le T$, where $\TA{G} \in \C^{k\times 1 \times n}$ is a tensor consisting of a single lateral slice, to be determined. 

There are several ways to define $\TA{G}$. In this paper, we take 
$\TA{G} = \T{Z}^* \starM \TA{X}_0$,  which solves the optimization problem $$\min_{\TA{G}} \| \TA{X}_{0} - \T{Z}\starM  \TA{G}\|_F$$ for the given initial state. An alternative approach would be to solve 
\[ \min_{\TA{G}} \sum_{t=0}^{n-1}\| \TA{X}_{t} - \T{Z}\starM  \T{T}^t \starM \TA{G} \|_F^2,  \]
 optimizing the coefficients $\TA{G}$ to recover the entire trajectory, rather than just the initial state. However, we do not pursue this approach in this paper.

\subsection{Storage and computational costs}\label{ssec:costs}
In $\starM$-DMD, there are two options for 
producing a truncated $\starM$-SVD approximation, so we will examine the storage costs and computational costs for each separately.  
For comparison, we use the exact-DMD but with the Schur decomposition instead of an eigendecomposition.

We first discuss the storage costs of the DMD modes, needed to reproduce the dynamics. If $\B{M}$ can be applied via fast transform (e.g. a DCT or FFT), we need not store the matrix explicitly.  In the case that moving in and out of the transform domain cannot be done via fast transform, one must also account for the cost of storing $\B{M}$, denoted $\mathsf{st}(\B{M})$, since we need $\B{M}$ to move in and out of the transform domain. 
Thus, if fast transforms are used, $\mathsf{st}(\B{M}) = 0$ whereas 
if $\B{M}$ is stored elementwise, we have $\mathsf{st}(\B{M}) = \mathsf{nnz}(\B{M})$, the number of non-zero elements in $\B{M}$.

If tr-tSVDM is used for approximating $\T{X}$, then the cost of  storing the DMD modes $\T{Z}$ are $mnk$ floating point numbers (flns), the cost of storing $\TA{G}$ is $kn$ flns and the cost of storing $\T{T}$ is $nk(k+1)/2$ flns.  We refer to this as $\starM$-DMD in Table \ref{tab:costs}.  If tr-tSVDMII is used, then it is more convenient to store this tensor implicitly in the transform domain.   Thus, the  cost to store $\T{Z}$, $\T{T}$, and $\TA{G}$ therefore is $m\sum_{j=1}^nk_j  + \sum_{j=1}^n [k_j(k_j+1)/2 + k_j] +   \mathsf{st}(\B{M})$ flns.  We refer to this as $\starM$-DMDII in Table \ref{tab:costs}.

\begin{table}[!ht]  \label{tab:costs}
    \centering
    \begin{tabular}{c|c}
          & Storage costs (flns)\\ \hline
      DMD   &   $mnk + k(k+1)/2$   \\ 
      $\starM$-DMD &  $mnk + nk(k+1)/2 + nk + \mathsf{st}(\B{M}) $   \\ 
      $\starM$-DMDII & $m\sum_{j=1}^nk_j  + \sum_{j=1}^n [k_j(k_j+1)/2  + k_j] + \mathsf{st}(\B{M})$  \\ \hline
       &  Computational cost (flops) \\
       \hline
      DMD   &  $\mc{O}(mnp\min\{nm,p\} + nk^2 +k^3 + mnk^2)$    \\ 
      $\starM$-DMD & $\mc{O}(mnp\min\{m,p\} + nk^3 + mnk^2) +  2mnpT_{\B{M}}$    \\ 
      $\starM$-DMDII & $\mc{O}(mnp\min\{m,p\} + \sum_{j=1}^nk_j^3 + m\sum_{j=1}^nk_j^2) +  2mnpT_{\B{M}}$
    \end{tabular}
    \caption{  Comparison of storage and computational costs of the different methods. }
   
\end{table}

Now we need to assess the computation cost beyond the cost of computing the $\starM$-SVD. Note that we only need to consider the cost of moving the tensor to the transform domain and back once for the $\starM$-DMD computations. First, we consider using tr-tSVDM. The cost of the $\starM$-Schur factorization is  $\mc{O}(k^3n)$ flops, and the cost to form $\T{Z}$ is $\mc{O}(nmk^2)$ flops. Next, considering the use of tr-tSVDMII, the cost of $\starM-$Schur factorization is  $\mc{O}(n\sum_{j=1}^nk_j^3)$ flops, and the cost to form $\T{Z}$ is $\mc{O}(m\sum_{j=1}^nk_j^2)$ flops.  The total cost of the two $\starM$-DMD algorithms, which include the cost of the $\starM$-SVD from Section \ref{ssec:cc}, is in Table \ref{tab:costs}. We compare to the total cost of the four steps of the standard matrix-based DMD in the table as well.

\section{Connections to Other DMD Frameworks} \label{sec:connections}
 
In this section, we explore the connection to the standard DMD (Section~\ref{ssec:connection}) and physics-informed DMD (Section~\ref{ssec:pidmd}). We give an interpretation of the modes in Section~\ref{ssec:interp}, explore an alternative optimization in Section~\ref{ssec:altopt}, and the Rayleigh-Ritz viewpoint in Section~\ref{ssec:rrview}. 

\subsection{Connection to Standard DMD}\label{ssec:connection}
We now explain how the $\starM$-DMD approximation is connected to the standard DMD approach. We will need a result about the $\starM$ product and a definition.

In the following result, we give an alternative way to compute the $\starM$ product. This result is previously known for the t-product (a special case of the $\starM$ product), but the result we present here is the general form, applicable for any choice of $\B{M}$. 
\begin{proposition}\label{prop:starmunfold} Let $\T{C} = \T{A} \starM \T{B}$ denote the $\starM$ product with $\B{M}$ invertible. Then, an alternate formula for $\T{C}$ is 
    \[ \begin{aligned}
        \T{C} = & \> \fold{(\B{M}^{-1}\otimes \B{I})\bdiag{\Th{A}} \unfold{\Th{B}}} \\
        = & \>\fold{(\B{M}^{-1}\otimes \B{I})\bdiag{\T{A}\times_3\B{M} } \unfold{\T{B} \times_3\B{M}}}.
    \end{aligned}  \]
\end{proposition}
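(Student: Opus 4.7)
The plan is to unwind the definition of the $\starM$ product and show that the block-diagonal/Kronecker formula is just a reorganization of the transform-domain facewise products. The key bookkeeping fact I will use is the property already noted in the tensor notation subsection: $\unfold{\Th{X}} = (\B{M}\otimes \B{I})\unfold{\T{X}}$, which, since $\B{M}$ is invertible, equivalently reads $\unfold{\T{X}} = (\B{M}^{-1}\otimes \B{I})\unfold{\Th{X}}$. This means applying $(\B{M}^{-1}\otimes\B{I})$ to the unfolding is exactly the mode-3 action of $\B{M}^{-1}$, i.e.\ the inverse transform step in the $\starM$ product.

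The first step is to recall that by definition of $\starM$, if $\T{C}=\T{A}\starM\T{B}$ then $\Th{C}(:,:,k)=\Th{A}(:,:,k)\Th{B}(:,:,k)$ for $1\le k\le n$ and then $\T{C}=\Th{C}\times_3 \B{M}^{-1}$. The second step is the only real computation: rewrite the collection of facewise products in matrix form. Since $\unfold{\cdot}$ stacks the tubes of each lateral slice, the $j$th column of $\unfold{\Th{B}}$ is the vertical stack of the vectors $\Th{B}(:,j,k)$ for $k=1,\dots,n$. Multiplying by the block-diagonal matrix $\bdiag{\Th{A}}$ column-by-column then yields
\[
\bdiag{\Th{A}}\,\unfold{\Th{B}}(:,j)
=\bmat{\Th{A}(:,:,1)\Th{B}(:,j,1)\\ \vdots\\ \Th{A}(:,:,n)\Th{B}(:,j,n)}
=\bmat{\Th{C}(:,j,1)\\ \vdots\\ \Th{C}(:,j,n)}
=\unfold{\Th{C}}(:,j),
\]
so $\bdiag{\Th{A}}\,\unfold{\Th{B}}=\unfold{\Th{C}}$.

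The third step applies $(\B{M}^{-1}\otimes\B{I})$ on the left. By the identity $\unfold{\T{X}}=(\B{M}^{-1}\otimes\B{I})\unfold{\Th{X}}$ applied to $\T{X}=\T{C}$, we obtain $(\B{M}^{-1}\otimes\B{I})\unfold{\Th{C}}=\unfold{\T{C}}$. Folding both sides gives
\[
\fold{(\B{M}^{-1}\otimes\B{I})\,\bdiag{\Th{A}}\,\unfold{\Th{B}}}=\fold{\unfold{\T{C}}}=\T{C},
\]
which is the first claimed equality. The second equality is a cosmetic substitution: since $\Th{A}=\T{A}\times_3\B{M}$ and $\Th{B}=\T{B}\times_3\B{M}$ by definition, we may replace $\Th{A}$ and $\Th{B}$ in the formula by their mode-3 expressions.

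I do not expect a serious obstacle; the only place that requires care is making sure that the ordering convention used by $\unfold{\cdot}$ matches the block ordering of $\bdiag{\cdot}$, so that the tubewise slicing on the right of $\bdiag{\Th{A}}$ lines up block-for-block with the frontal slices of $\Th{A}$. That consistency is precisely what is guaranteed by the identity $\unfold{\Th{X}}=(\B{M}\otimes \B{I})\unfold{\T{X}}$ stated in the Tensor Notation subsection, so once that observation is invoked the argument reduces to the one-line column-by-column calculation above.
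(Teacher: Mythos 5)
Your proof is correct and follows essentially the same route as the paper, which simply cites the two identities $\unfold{\Th{C}}=\bdiag{\Th{A}}\,\unfold{\Th{B}}$ and $\unfold{\T{C}}=(\B{M}^{-1}\otimes\B{I})\unfold{\Th{C}}$; your column-by-column verification of the first identity just makes explicit what the paper leaves as an exercise. No gaps.
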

\begin{proof}
    Follows from $\unfold{\Th{C}} =  \bdiag{\Th{A}} \unfold{\Th{B}}$ and $\unfold{\T{C}} =(\B{M}^{-1}\otimes \B{I})\unfold{\Th{C}}. $
\end{proof}
We can also compute $\T{C}$ through the operator 
\[ \unfold{\T{C}} = (\B{M}^{-1}\otimes \B{I}) \bdiag{\T{A}\times_3\B{M} }  (\B{M}\otimes \B{I})\unfold{\T{B}}.\]

 We will also need the following definition. 
 \begin{definition}
     Given  $\B{M}\in \C^{n_2\times n_2}$ unitary,  define 
 \[  \mb{A}^{n_1 \times n_3}_{\B{M}} \equiv \left\{ (\B{M}^{*}\otimes \B{I})\bdiag{\T{X}\times_3\B{M}}  (\B{M}\otimes \B{I}) \left|   \T{X} \in \C^{n_1\times n_3\times n_2} \right. \right\} .\] 
 This is a subspace of block-structured matrices in $\C^{(n_1n_2)\times (n_2n_3)}$ of dimension $n_1n_2n_3$.
 \end{definition} Note that:

\begin{enumerate}
    \item $\B{M}= \B{I}$, then $\mb{A}^{n_1 \times n_3}_{\B{I}}$ is the subspace of block diagonal matrices 
    \item $\B{M} = \B{F}$ (DFT matrix), then $\mb{A}^{n_1 \times n_3}_{\B{F}}$ is the subspace of block circulant matrices. 
\end{enumerate}

 We now show the connection between $\starM$ DMD and the standard DMD. For any $\T{A} \in \C^{m \times m \times n}$ and $\B{M} \in \C^{n\times n}$, we can rewrite 
\[ \| \T{Y} - \T{A}\ast_{\B{M}}\T{X}\|_F^2 = \|\B{Y} - (\B{M}^{*}\otimes \B{I})\bdiag{\T{A}\times_3\B{M}}  (\B{M}\otimes \B{I})\B{X}\|_F^2.  \] 
This means that we can rewrite~\eqref{eqn:tensorreg} as the least squares problem as
\begin{equation} \label{eqn:dmdmatlstsq}\min_{\B{A} \in \mb{A}_{\B{M}}^{m \times m}} \| \B{Y} - \B{AX}\|_F^2.  \end{equation}
In other words, compared to~\eqref{eqn:linreg}, where the optimization is unconstrained,~\eqref{eqn:tensorreg} solves a highly structured constrained problem. The optimal solution is 
\begin{equation}\label{eqn:starmdmd}\B{A}_{\rm StarMDMD} = (\B{M}^*\otimes \B{I})\bdiag{\T{A}_{\rm StarMDMD}\times_3\B{M}}  (\B{M}\otimes \B{I}).\end{equation}  

\subsection{Connection to Physics-informed DMD}\label{ssec:pidmd} The optimization problem in \eqref{eqn:dmdmatlstsq} is closely related to the Physics-informed DMD approach (piDMD)~\cite{baddoo2023physics}. In piDMD, the optimization problem takes the form 
\[ \min_{\B{A} \in \mc{M}} \| \B{Y} - \B{AX}\|_F^2,\] 
where $\mc{M}$ is an appropriate subset of $\C^{mn \times mn}$. For example, one option is to impose the constraint that $\B{A}$ is unitary, i.e., $\mc{M}$ is the unitary group,  which represents a rotation or reflection. In another option, $\mc{M}$ is the subspace of circulant matrices, which preserves shift-invariance. Other options for $\mc{M}$ include the space of Hermitian, skew-Hermitian, banded, and upper triangular. 

In contrast, in the starMDMD, the constraint is $\mc{M} = \mb{A}^{m\times m}_{\B{M}}$ is defined by the $\starM$-product. As mentioned earlier, if $\B{M}=\B{I}$, then $\mb{A}^{m\times m}_{\B{M}}$ is the space of block-diagonal matrices, and if $\B{M}$ is the DFT matrix, then $\mb{A}^{m\times m}_{\B{M}}$ is the space of block-circulant matrices. This latter option corresponds to the shift-equivariant case~\cite{baddoo2023physics}.

\subsection{Interpretation of the Modes}\label{ssec:interp}
If no truncation is used in the exact $\starM$-DMD approach, then we can write $\T{A}_{\rm StarMDMD} = \T{Z} \starM \T{T} \starM \T{Z}\t$ where $\T{Z}$ is unitary, and $\T{T}$ has upper triangular frontal slices in the transform domain. Using Proposition~\ref{prop:starmunfold}, we can express  $\T{A}_{\rm StarMDMD}$ in  matrix form as 
\[ \B{A}_{\rm StarMDMD} =  (\B{M}\t\otimes \B{I}) \bdiag{\Th{Z}} \bdiag{\Th{T}} (\bdiag{\Th{Z}})\t (\B{M}\otimes \B{I}),   \] 
where hats denote tensors in  the transform domain. This represents a Schur factorization of the form $\B{A}_{\rm StarMDMD} = \B{WTW}^*$, where $\B{W} = (\B{M}\t\otimes \B{I}) \bdiag{\Th{Z}}$ is unitary and $\B{T} = \bdiag{\Th{T}}$ is upper triangular; more specifically, it is (block) upper triangular, with diagonal blocks in upper triangular form. Therefore, the columns of $\B{W}$ represent the Schur vectors, and the diagonals of $\B{T}$ represent the eigenvalues of $\B{A}_{\rm StarMDMD}$. Note that Schur vectors are not guaranteed to be eigenvectors. 

\subsection{Alternative optimization}\label{ssec:altopt} Instead of \eqref{eqn:tensorreg}, we can solve an alternative optimization problem 
\begin{equation}
    \label{eqn:altopt}
    \min_{\B{A} \in \mb{A}_{\B{M}}^{m \times m}} \|\B{YX}^\dagger - \B{A}\|_F^2.
\end{equation}   
That is, we seek the closest approximation in $\mb{A}_{\B{M}}^{m \times m}$ to the exact matrix DMD. However, the minimizer of this optimization problem is not $\B{A}_{\rm StarMDMD}$, in general. 

\begin{proposition}\label{prop:altopt}
Let  $\T{G}$ be a tensor constructed such that $\T{G} = \T{Y} \starM \T{Z}$, where $\Th{Z}(:,:,i) =  [\unfold{\Th{X}}^\dagger] (:,(i-1)*p+1:i*p )$. Then 
\[ (\B{M}^{*}\otimes \B{I})\bdiag{\T{G}\times_3\B{M}}  (\B{M}\otimes \B{I}),\]
is a minimizer of~\eqref{eqn:altopt}. 
\end{proposition}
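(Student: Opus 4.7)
The strategy is to exploit unitary invariance of the Frobenius norm to convert the constrained optimization over $\mb{A}_{\B{M}}^{m\times m}$ into an unconstrained projection onto block-diagonal matrices, and then to identify that block-diagonal minimizer with the tensor $\T{G}\times_3\B{M}$ defined in the statement.

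First, I would parametrize $\B{A} \in \mb{A}_{\B{M}}^{m\times m}$ as $\B{A} = (\B{M}^*\otimes\B{I})\bdiag{\Th{A}}(\B{M}\otimes\B{I})$ and apply the unitary matrices $(\B{M}\otimes\B{I})$ and $(\B{M}^*\otimes\B{I})$ to both sides of the residual. Since these are unitary (because $\B{M}$ is unitary), this gives
\[ \|\B{Y}\B{X}^\dagger - \B{A}\|_F^2 = \|\B{C} - \bdiag{\Th{A}}\|_F^2, \qquad \B{C} := (\B{M}\otimes\B{I})\B{Y}\B{X}^\dagger(\B{M}^*\otimes\B{I}). \]
Minimizing the right-hand side over arbitrary block-diagonal $\bdiag{\Th{A}}$ with $m\times m$ blocks simply amounts to taking each diagonal block of $\Th{A}$ to equal the corresponding diagonal block of $\B{C}$; off-diagonal blocks of $\B{C}$ contribute an irreducible residual. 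Thus the minimizer is $\Th{A}(:,:,i) = \B{C}_{ii}$, where $\B{C}_{ii}$ denotes the $i$-th $m\times m$ diagonal block of $\B{C}$.

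Next, I would simplify $\B{C}$. Using $\B{Y} = \unfold{\T{Y}}$ and the identity $\unfold{\Th{Y}} = (\B{M}\otimes\B{I})\unfold{\T{Y}}$, the left factor yields $(\B{M}\otimes\B{I})\B{Y} = \unfold{\Th{Y}}$. For the right factor, I would use the standard fact that for a unitary matrix $\B{P}$, $\B{X}^\dagger\B{P}^* = (\B{P}\B{X})^\dagger$, applied with $\B{P} = \B{M}\otimes\B{I}$, giving $\B{X}^\dagger(\B{M}^*\otimes\B{I}) = \unfold{\Th{X}}^\dagger$. Hence
\[ \B{C} = \unfold{\Th{Y}}\,\unfold{\Th{X}}^\dagger. \]

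Finally, I would read off the diagonal blocks. Since $\unfold{\Th{Y}}$ is the vertical stack of the frontal slices $\Th{Y}(:,:,i)$ and $\unfold{\Th{X}}^\dagger$ has columns partitioned into $n$ blocks of width $m$ whose $i$-th block is precisely $\Th{Z}(:,:,i)$ by definition, the $(i,j)$ block of $\B{C}$ is $\Th{Y}(:,:,i)\,\Th{Z}(:,:,j)$. The $i$-th diagonal block therefore equals $\Th{Y}(:,:,i)\,\Th{Z}(:,:,i) = \Th{G}(:,:,i)$, where $\T{G} = \T{Y}\starM\T{Z}$ and $\Th{G} = \T{G}\times_3\B{M}$. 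Consequently the optimal $\bdiag{\Th{A}}$ is $\bdiag{\T{G}\times_3\B{M}}$, and the corresponding optimal $\B{A}$ is $(\B{M}^*\otimes\B{I})\bdiag{\T{G}\times_3\B{M}}(\B{M}\otimes\B{I})$, as claimed.

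The only step requiring real care is the manipulation $\B{X}^\dagger(\B{M}^*\otimes\B{I}) = ((\B{M}\otimes\B{I})\B{X})^\dagger$; I would justify it explicitly using the Moore--Penrose conditions and unitarity, since it underlies the whole reduction. Once that is in place, every remaining step is an indexing check on how $\unfold{\cdot}$ stacks frontal slices.
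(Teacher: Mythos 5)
Your proposal is correct and follows essentially the same route as the paper's proof: unitary invariance of the Frobenius norm reduces the problem to projecting $\unfold{\Th{Y}}\,[\unfold{\Th{X}}]^\dagger$ onto block-diagonal matrices, whose minimizer is read off blockwise as $\Th{G}(:,:,i) = \Th{Y}(:,:,i)\Th{Z}(:,:,i)$. The only difference is that you spell out the pseudoinverse identity $\B{X}^\dagger(\B{M}^*\otimes\B{I}) = ((\B{M}\otimes\B{I})\B{X})^\dagger$ explicitly, whereas the paper delegates it to a citation.
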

\begin{proof}
Let $\B{A} = (\B{M}^{*}\otimes \B{I})\bdiag{\T{A}\times_3\B{M}}  (\B{M}\otimes \B{I})$. By unitary invariance of the Frobenius norm and~\cite[Theorem 2.2.3]{bjorck2015numerical}
\[ \begin{aligned}
    \|\B{YX}^\dagger - \B{A}\|_F = & \>  \| (\B{M}\otimes \B{I}) \B{YX}^\dagger (\B{M}\t\otimes \B{I}) -  \bdiag{\Th{A}}  \|_F\\
    = & \| \unfold{\Th{Y}} [\unfold{\Th{X}}]^\dagger-  \bdiag{\Th{A}}\|_F^2. 
\end{aligned} \] 
Because of the block diagonal structure, the minimum norm solution $\Th{G}(:,:,i) = \Th{Y}(:,:,i) \Th{Z}(:,:,i) $ for $1\le i \le n$. The rest follows from properties of $\starM$ product. 
\end{proof}
Note that $\T{Y}\starM \T{X}^\dagger$ is different from $\T{G}$ as defined in Proposition~\ref{prop:altopt}, since the slicewise pseudoinverse differs from computing slices of the pseudoinverse. A disadvantage of this approach is that the step of forming $\unfold{\Th{X}}^\dagger$ is not readily parallelizable across the slices, so the approach by solving~\eqref{eqn:tensorreg} is preferable from that point of view. Therefore, we did not explore this approach further.  
\subsection{Rayleigh-Ritz Viewpoint}\label{ssec:rrview}
The exact DMD is related to the Rayleigh-Ritz approach for estimating eigenpairs, which we briefly review. Consider the eigenvalue problem $\B{Ax} = \lambda \B{x}$. Given a subspace spanned by the columns of the matrix $\B{V} \subset \C^k$, with orthonormal columns, we can consider the Rayleigh-Ritz approximation
\[  \B{Ax} - \lambda\B{x} \perp \range(\B{V}), \qquad \B{x} \in \range(\B{V}),\] 
where $\range(\B{V})$ is the range of the matrix, or the span of the columns of $\B{V}$.  Thus, in the Rayleigh-Ritz approach, we search for solutions $\B{x} \in \range(\B{V})$ and obtain them by setting the residual $\B{Ax} -\lambda \B{x}$ orthogonal to $\range(\B{V})$. The solutions can be obtained by forming $\B{B} = \B{V}\t\B{AV}$ and computing its eigenpairs $(\theta_i,\B{y}_i)$ for $1 \le i \le k$. Finally, we can obtain the approximate eigenpair (or Ritz pairs) as $(\theta_i,\B{Vy}_i)$ for $1 \le i \le k$. It can be seen that the Ritz pairs are the eigenpairs of $\Bt{A} = \B{VV}\t\B{AVV}\t$. Furthermore, we can use the Schur vectors instead of the eigenvectors. See~\cite[Section 4.3.1]{saad2011numerical} for more details. 

With this viewpoint,~\eqref{eqn:galerkin} shows that the eigenpairs of $\Bt{A}_{\rm DMD}$ are the Ritz pairs corresponding to $\B{A}_{\rm DMD}$. We can extend this interpretation to the $\starM$-product. Consider $\B{A}_{\rm starMDMD}$ as in~\eqref{eqn:starmdmd} formed from the tensor $\T{A}_{\rm StarMDMD}$. We then form the approximation $\Tt{A}_{\rm StarMDMD} = \T{U}\starM \T{K} \starM \T{U}\t$, where $\T{U}$ is obtained from the truncated $\starM$-SVD with multirank $\B{k}$ and $\T{K} = \T{U}\t \starM \T{Y}\starM \T{X}^\dagger \starM \T{U}$. Analogous to the derivation of $\B{A}_{\rm StarMDMD}$, we can show that $\Tt{A}_{\rm StarMDMD}$ corresponds to the matrix  
\[ \Bt{A}_{\rm StarMDMD} = \B{P} \B{A}_{\rm StarMDMD} \B{P},  \]
where $\B{P} = \B{VV}\t$ is an orthogonal projector with  $\B{V} = (\B{M}\t\otimes \B{I})\bdiag{\Th{U}}$. Thus, the Ritz pairs of $\B{A}_{\rm StarMDMD}$ corresponding to the subspace spanned by the columns of $(\B{M}\t\otimes \B{I})\bdiag{\Th{U}}$ are the eigenpairs of $\Bt{A}_{\rm StarMDMD}$.

\section{Streaming \texorpdfstring{$\starM$}{}-DMD}\label{sec:streaming}
In this section, we derive a randomized algorithm for $\starM$-DMD in the streaming setting.  Note that when $n=1$, the tensor collapses to a matrix. Then, taking $\B{M}=1$, 
all tensor operations collapse to matrix operations.  Thus, our description covers both the streaming model in the matrix AND tensor cases.

First, we clarify the streaming model in Section~\ref{ssec:streaming}. Next, we address $\starM$-SVD in the streaming setting (Section~\ref{ssec:starmstreamingsvd}), before discussing its implementation inside the $\starM$-DMD (Sections~\ref{ssec:starmstreamingdmd}).

\subsection{Streaming setting}\label{ssec:streaming} In this setting, we assume that the lateral slices $\TA{C}_t$ arrive sequentially, in batches. Once the batch arrives, we process it and then discard the batch. We are no longer allowed access to the batches.  Suppose there are $b$ different batches of indices $B_1 = \{0,\dots,i_1\}$, $B_2 = \{i_1+1,\dots,i_2\}, \dots, B_b =\{i_{b-1}+1,\dots,T\}$, where $ 0 < i_1 < i_2 < \dots < i_{b-1}$. Note that we can decompose the tensor $\T{C}$ that contains all the snapshots as lateral slices into $b$ tensors 
\[ \T{C} = \T{C}_1 + \dots + \T{C}_b ,\] 
where $\T{C}_j$ contain the lateral slices indexed by $B_j$ and zeros otherwise. 

Rather than recomputing the DMD modes from scratch each time, the goal is to update the decomposition sequentially. We will exploit the linearity of the tensor in the streaming approach. First, we discuss how to obtain the compressed representation of $\T{C}$.

\subsection{Streaming \texorpdfstring{$\starM$}{}-SVD}\label{ssec:starmstreamingsvd}
We follow the randomized single-view algorithm for matrices proposed in~\cite{tropp2017practical}. In this approach, we draw two random tensors $\T{G}_1 \in \R^{p \times \rho_1 \times n}$ and $\T{G}_2 \in \R^{ \rho_2 \times m\times n}$. To specify the distribution of the entries, we will need the following definition. 
\begin{definition}[Random Gaussian tensor]\label{def:randgauss}
    A random Gaussian tensor $\T{G}\in \C^{m \times p \times n}$ is defined as follows. Let $\B{G} \in \R^{m \times p}$ be a standard Gaussian random matrix (independent entries drawn from normal distribution with zero mean and unit variance). Define the tensor in the transform domain as $\Th{G}(:,:,j) = \B{G}$ for $1 \le j \le n$; finally, $\T{G} = \Th{G} \times \B{M}^*$.  
\end{definition}

We compute the sketches $\T{Y}_1 = \T{C}\starM \T{G}_1$ and $\T{Y}_2 = \T{G}_2 \starM \T{C}$. Note that the sketches $\T{Y}_1$ and $\T{Y}_2$ can be updated sequentially by the linearity of $\T{C}$.

We compute a thin-QR factorization of $\T{Y}_1 = \T{Q}_1 \starM \T{R}_{1}$. Then, we obtain the low-rank approximation 
\begin{equation}\label{eqn:streamingapprox} \T{C} \approx \Tt{C} = \T{Q}_1 \starM (\T{G}_2\t \starM \T{Q}_1)^\dagger  \starM \T{Y}_2.  \end{equation}
The following result quantifies the error in the low-rank approximation, and follows from the results in~\cite{tropp2017practical} and the properties of the $\starM$-product. We need the following notation. Let $k$ be the target rank for each frontal slice. Let $\T{C}_{(k)}$ be the $\starM$-SVDM approximation to $\T{C}$. Then,
\[ \|\T{C} - \T{C}_{(k)}\|_F^2= \sum_{j=1}^n \sum_{i> k} (\hat\sigma^{(j)}_i)^2,\]
where $\hat\sigma^{(j)}_i$ denote the $i$-th singular value (here $1 \le i \le \min\{m,p\}$) of the $j$-th frontal slice (for $1 \le j \le n$) in the transform domain. 

\begin{theorem} Let $\T{G}_1 \in \C^{p \times \rho_1 \times n} $ and $\T{G}_2 \in \C^{\rho_2 \times m \times n}$ be two independent Gaussian tensors (see Definition~\ref{def:randgauss}), constructed using standard Gaussian random matrices $\B{G}_1 \in \R^{p \times \rho_1} $ and $\B{G}_2\in \R^{\rho_2 \times m}$, respectively, as the frontal slices in the transform domain. Construct the approximation $\Tt{C}$  to $\T{C}$ using~\eqref{eqn:streamingapprox}. Then, with $\rho_1 = 2k + 1$ and $\rho_2 = 2 \rho_1 + 1$,   
\[ \mb{E}_{\B{G}_1, \B{G}_2}[ \| \T{C} - \Tt{C}\|_F^2  ]  \le 4 \| \T{C} - \T{C}_{(k)}\|_F^2.  \]
    
\end{theorem}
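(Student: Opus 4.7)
The plan is to reduce the tensor-level expectation bound to the matrix-level randomized single-view SVD bound of~\cite{tropp2017practical} applied slicewise in the transform domain. The key enabling observation is that under Definition~\ref{def:randgauss} the transform-domain frontal slices of $\T{G}_1$ and $\T{G}_2$ are all identical copies of two independent standard Gaussian matrices $\B{G}_1\in\R^{p\times\rho_1}$ and $\B{G}_2\in\R^{\rho_2\times m}$, and that the $\starM$-product, QR factorization, and pseudo-inverse all act independently on frontal slices once we pass to the transform domain.

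First I would move to the transform domain. Since $\B{M}$ is unitary, the map $\T{X}\mapsto\T{X}\times_3\B{M}$ preserves the Frobenius norm, so
\[
\|\T{C}-\Tt{C}\|_F^2 \;=\; \|\Th{C}-\Tt{\Th{C}}\|_F^2 \;=\; \sum_{j=1}^{n}\bigl\|\Th{C}(:,:,j)-\Tt{\Th{C}}(:,:,j)\bigr\|_F^2.
\]
Using the slicewise formulas for the $\starM$ operations and writing $\Th{Q}_1(:,:,j)$ for the matrix-QR orthonormal factor of $\Th{C}(:,:,j)\B{G}_1$, the $j$-th slice of the estimator becomes
\[
\Tt{\Th{C}}(:,:,j) \;=\; \Th{Q}_1(:,:,j)\bigl(\B{G}_2\,\Th{Q}_1(:,:,j)\bigr)^{\dagger}\B{G}_2\,\Th{C}(:,:,j),
\]
which is exactly the two-sided randomized single-view estimator of~\cite{tropp2017practical} applied to the matrix $\Th{C}(:,:,j)$ with test matrices $\B{G}_1$ and $\B{G}_2$.

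Second, I would invoke the matrix-level expectation bound of~\cite{tropp2017practical}. For each fixed $j$, the marginal distribution of the pair $(\B{G}_1,\B{G}_2)$ is exactly the independent standard Gaussian pair assumed by the matrix result. With $\rho_1 = 2k+1$ and $\rho_2 = 2\rho_1+1 = 4k+3$, the two oversampling factors $1 + k/(\rho_i - k - 1)$ each equal $2$, producing
\[
\mb{E}\bigl[\|\Th{C}(:,:,j)-\Tt{\Th{C}}(:,:,j)\|_F^2\bigr] \;\le\; 4\,\bigl\|\Th{C}(:,:,j)-[\Th{C}(:,:,j)]_k\bigr\|_F^2,
\]
where $[\,\cdot\,]_k$ denotes the best matrix rank-$k$ approximation in Frobenius norm.

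Finally, I would sum over $j$ and use the Eckart--Young-type characterization that $\T{C}_{(k)}$ is obtained by truncating each transform-domain slice to rank $k$, so $\sum_j \|\Th{C}(:,:,j)-[\Th{C}(:,:,j)]_k\|_F^2 = \|\T{C}-\T{C}_{(k)}\|_F^2$. Linearity of expectation then yields the advertised bound. The main subtlety to track is that the \emph{same} Gaussian matrices $\B{G}_1,\B{G}_2$ are reused across all $n$ transform-domain slices, so the per-slice errors are correlated; however, linearity of expectation requires no independence, and the matrix-level bound only needs the correct marginal law of $(\B{G}_1,\B{G}_2)$, which holds for every $j$. The only numerical bookkeeping is the choice $\rho_2 = 2\rho_1+1$, which is exactly tuned so the product of the two oversampling inflations equals $4$.
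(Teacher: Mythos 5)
Your proof is correct and follows essentially the same route as the paper: unitary invariance of the Frobenius norm to reduce to slicewise errors in the transform domain, the two-sided single-view bound of Tropp et al.\ applied to each slice using the marginal law of $(\B{G}_1,\B{G}_2)$, and linearity of expectation to sum (your remark that reusing the same Gaussian matrices across all $n$ slices is harmless because linearity of expectation needs no independence is a nice point the paper leaves implicit). One cosmetic slip: the second inflation factor is $1+\rho_1/(\rho_2-\rho_1-1)$ rather than $1+k/(\rho_2-k-1)$, though with $\rho_2=2\rho_1+1$ it still equals $2$, so your product-of-$4$ conclusion stands.
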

\begin{proof}
    By the unitary invariance of the Frobenius norm
    \begin{equation}\label{eqn:unitinvar}\| \T{C} - \Tt{C}\|_F^2 = \sum_{j=1}^n \| \Th{C}(:,:,j) - \Th{Q}_1(:,:,j) ( \Th{G}_2(:,:,j) \Th{Q}_1(:,:,j) )^\dagger \Th{Y}_2(:,:,j)\|_F^2. \end{equation}
    Apply~\cite[Theorem 4.3]{tropp2017practical}, for each frontal slice $1 \le j \le n$
    \[ \mb{E}_{\B{G}_1,\B{G}_2} \left[\| \Th{C}(:,:,j) - \Th{Q}_1(:,:,j) ( \Th{G}_2(:,:,j) \Th{Q}_1(:,:,j) )^\dagger \Th{Y}_2(:,:,j)\|_F^2\right] \le  4 \sum_{i> k} (\hat\sigma^{(j)}_i)^2. \] 
    The proof is complete by taking expectations in~\eqref{eqn:unitinvar}, using the linearity of expectations,  and the above inequality.
\end{proof}
The interpretation of this result is straightforward. By the Eckart-Young theorem for $\starM$-product~\cite[Theorem 3.8]{kilmer2021tensor}, $\T{C}_{(k)}$ is an optimal approximation to $\T{C}$ with multirank $\B{k} = (k,\dots,k)$. In expectation, the streaming approximation defined in~\eqref{eqn:streamingapprox} is within a factor of $4$ of the best possible error.

The low-rank approximation $\Tt{C}$ can be postprocessed to convert it into the $\starM$-SVD format using the technique in Section~\ref{ssec:conversion}. In addition, we truncate the tensor using tr-tSVDMII on $\Tt{C}$ with a parameter $\gamma \in (0,1]$ to obtain the final low-rank approximation $\Tt{C}_\gamma$.

\subsection{Streaming \texorpdfstring{$\starM$}{}-DMD}\label{ssec:starmstreamingdmd}
At the end of the streaming approach, we have a low-rank approximation to $\T{C}$ in the $\starM$-format. Before we apply the $\starM$-DMD approach, there are two differences to which we must pay attention. First, in the $\starM$ DMD, we have two tensors $\T{X}$ and $\T{Y}$ containing the snapshots $\{\B{x}\}_{j=0}^T$ as lateral slices. The tensor $\T{X}$ is obtained from $\T{C}$ by deleting the first column, and $\T{Y}$ is obtained from $\T{C}$ by deleting the last.  Second, we must work with the low-rank approximation to $\T{C}$ rather than $\T{X}$ directly. One can form an explicit approximation to $\T{X}$ from the low-rank approximation and then invoke Algorithm~\ref{alg:starmdmd}. But, we favor a slightly different approach that is more computationally efficient.

In this approach, we assume that we have the low-rank approximation in the $\starM$-format $\T{C} \approx \T{U}\starM \T{S}\starM \T{V}\t$. Next, we delete the first lateral slice from $\T{V}\t$ and the last lateral slice to obtain the tensors $\T{B}'$ and $\T{B}''$ respectively. Thus, we have the approximations 
\[ \T{X} \approx \T{U}\starM \T{S} \starM \T{B}' \qquad \T{Y} \approx \T{U} \starM \T{S} \starM \T{B}''. \]
From this point, we convert the low-rank approximation to the $\starM$-SVD format as $\T{U}\starM \T{S} \starM \T{B}' = \T{U}' \starM \T{S}' \starM (\T{V}')\t$; see~Section~\ref{ssec:conversion}. We can then proceed with the $\starM$-DMD approach as in Section~\ref{ssec:starmdmd}, but with the low-rank approximations to $\T{X}$ and $\T{Y}$, rather than the tensors themselves.

\begin{algorithm}[!ht]
    
    \caption{$\starM$-DMD from low-rank approximation of states}
    \label{alg:starmdmdlowrank}
    \begin{algorithmic}[1]
        \REQUIRE Input tensor approximation $\T{C} \approx \T{U} \starM \T{S} \starM \T{V}\t$
        \STATE Compute tensors $\T{B}'$ and $\T{B}''$ by deleting the last and first lateral slice, respectively, of $\T{V}\t$ 
        \STATE Compute thin-$\starM$-SVD $\T{S} \starM \T{B}' = \Tt{U}\starM \T{S}' \starM (\T{V}')\t$ and $\T{U}' = \T{U} \starM \Tt{U}$
        \STATE Compute $\T{K} = (\T{U}')\t \starM \T{U} \starM \T{S} \starM \T{B}'' \starM \T{V}' \starM  (\T{S}')^\dagger $
        \STATE Compute $\starM$-Schur decomposition $\T{Z} = \T{W} \starM \T{T} \starM \T{W}\t$
        \STATE Compute the modes $\T{Z} = \T{U}'\ast_{\B{M}} \T{W}$
    \RETURN Modes $\T{Z}$ and tensor containing eigenvalues $\T{T}$
    \end{algorithmic}
\end{algorithm}

In Algorithm~\ref{alg:starMsteaming}, we give an overview of the steps of the algorithm in the streaming setting. This algorithm combines the $\starM$-SVD with the $\starM$-DMD approach described just previously. 

\begin{algorithm}[!ht]
    \caption{Streaming $\starM$-DMD}
    \label{alg:starMsteaming}
    \begin{algorithmic}[1]
        \REQUIRE Tensors $\{\T{C}_j\}_{j=1}^b$ such that $\T{C} = \T{C}_1 + \dots + \T{C}_b$, parameters $\rho_{\max}$ and $\gamma \in (0,1]$. 
        \STATE Select $\rho_1= k_{\max}$ and $\rho_2 = 2\rho_1 + 1$
        \STATE Generate random tensors $\T{G}_1 \in \C^{p \times \rho_1 \times n}$ and $\T{G}_2 \in \C^{ \rho_2 \times m\times n}$ (see Definition~\ref{def:randgauss})
        \STATE Initialize sketches $\T{Y}_1 \in \C^{m \times k_1 \times n}$ and $\T{Y}_2 \in \C^{k_2 \times p \times n}$ to have zero entries
        \FOR{$j = 1,\dots,b$}
        \STATE Update sketches $\T{Y}_1  \leftarrow  \T{Y}_1   + \T{C}_j \starM \T{G}_1$, $\T{Y}_2 \leftarrow \T{Y}_2 + \T{G}_2 \starM \T{C}_j$
        
        \STATE Compute thin-$\starM$-QR factorization $\T{Y}_1 = \T{Q}_1 \starM \T{R}_{11}$. 
        \STATE Compute $\T{B} =(\T{G}_2\starM \T{Q}_1)^\dagger \starM \T{Y}_2$ \COMMENT{The intermediate approximation is $\sum_{i=1}^j \T{C}_i \approx \T{Q}_1 \starM \T{B}$}
        \STATE Apply tr-tSVDMII to $\T{B}$ with parameter $\gamma$ to get the low-rank approximation $\T{U}_1 \starM \T{S} \starM \T{V}\t$
        \STATE Compute $\T{U} = \T{Q}_1 \starM \T{U}_1$
        \STATE Use Algorithm~\ref{alg:starmdmdlowrank} with inputs $\T{U}$, $\T{S}$, $\T{V}$ to compute modes $\T{Z}$ and tensor containing eigenvalues $\T{T}$
        \ENDFOR 
        \RETURN Tensor $\T{Z}$, containing $\starM$-DMD modes,  and tensor $\T{T}$ containing eigenvalues
    \end{algorithmic}
\end{algorithm}

We now present the computational and storage costs associated with Algorithm \ref{alg:starMsteaming}.  

\paragraph{Computational costs} We discuss the computational cost for the updating algorithm. For simplicity, we assume that the batch size remains constant, that is, $B_1 = \dots = B_b = B$ and we assume $\rho_1$ and $\rho_2$ are both $\mc{O}(\rho)$. The cost of updating the sketches are $\mc{O}(mnB) + 2T_{\B{M}}mB$ flops. The subsequent tr-tSVDM computations cost $\mc{O}( mn\rho^2 + pn\rho^2 )$ flops. After truncation, assume that the multirank is $(k_1,\dots,k_n)$. Therefore, the cost of the $\starM$-DMD computations is $\mc{O}((m+p)\sum_{j=1}^nk_j^2 + \sum_{j=1}^nk_j^3)$ flops. Finally, there is another cost of $T_{\B{M}}m\rho$ flops for bringing it back into the standard domain. As with the $\starM$-DMD, there is enormous opportunity for parallelism.

\paragraph{Storage costs}
In Algorithm~\ref{alg:starMsteaming}, we have to draw two random matrices that define the tensors $\T{G}_1$ and $\T{G}_2$. These have a storage cost of $p\rho_1 + m\rho_2$ flns. Next, we must store the sketches $\T{Y}_1$ and $\T{Y}_2$ which have a combined storage cost of $n(m\rho_1+p\rho_2)$ flns.

\section{Numerical experiments}\label{sec:experiments}
In this section, we assess the performance of the two proposed static $\starM$-DMD algorithms on a cylinder flow dataset \cite[Chapter 2]{kutz2016dynamic} and a sea surface temperature dataset \cite{reynolds2002improved}. Additionally, we compare the performance of the streaming $\starM$-DMDII algorithm and the streaming DMD algorithm on the second cylinder flow dataset, namely the ``Cylinder Flow with von Karman Vortex Sheet'' dataset \cite{gerrisflowsolver, Guenther17}.

\paragraph{Choice of $\B{M}$} For the $\starM$-DMD and the $\starM$-DMDII methods, we have three different choices of the orthogonal matrix $\B{M}$: the discrete cosine transform (DCT) matrix, the normalized discrete sine transform (DST) matrix, and the left singular factor matrix of the transpose of the mode-3 unfolding of the given tensor. The last orthogonal matrix $\B{M}$ obtained is said to be from a data-driven approach, as discussed in \cite{kilmer2021tensor}. 

\paragraph{Storage cost comparison} When the orthogonal matrix $\B{M}$ is either the DCT matrix or the DST matrix, we can omit the storage costs of $\B{M}$ \cite{kilmer2021tensor}. In the case where $\B{M}$ is the data-driven orthogonal matrix, then the storage for $\B{M}$ is bounded by the number of nonzeros in $\B{M}$.  

\subsection{Cylinder flow} The cylinder flow dataset is obtained from a simulation of the two-dimensional Navier-Stokes equations, representing a time series of fluid
vorticity fields for the wake behind a circular cylinder at Reynolds number $\text{Re} = 100$. In total,  $150$ snapshots are collected, and each snapshot is of size $450\times 200$.

\begin{figure}[!ht]
\centering
\begin{tikzpicture}
\node at (0, 3.4) {\includegraphics[width=0.33\linewidth]{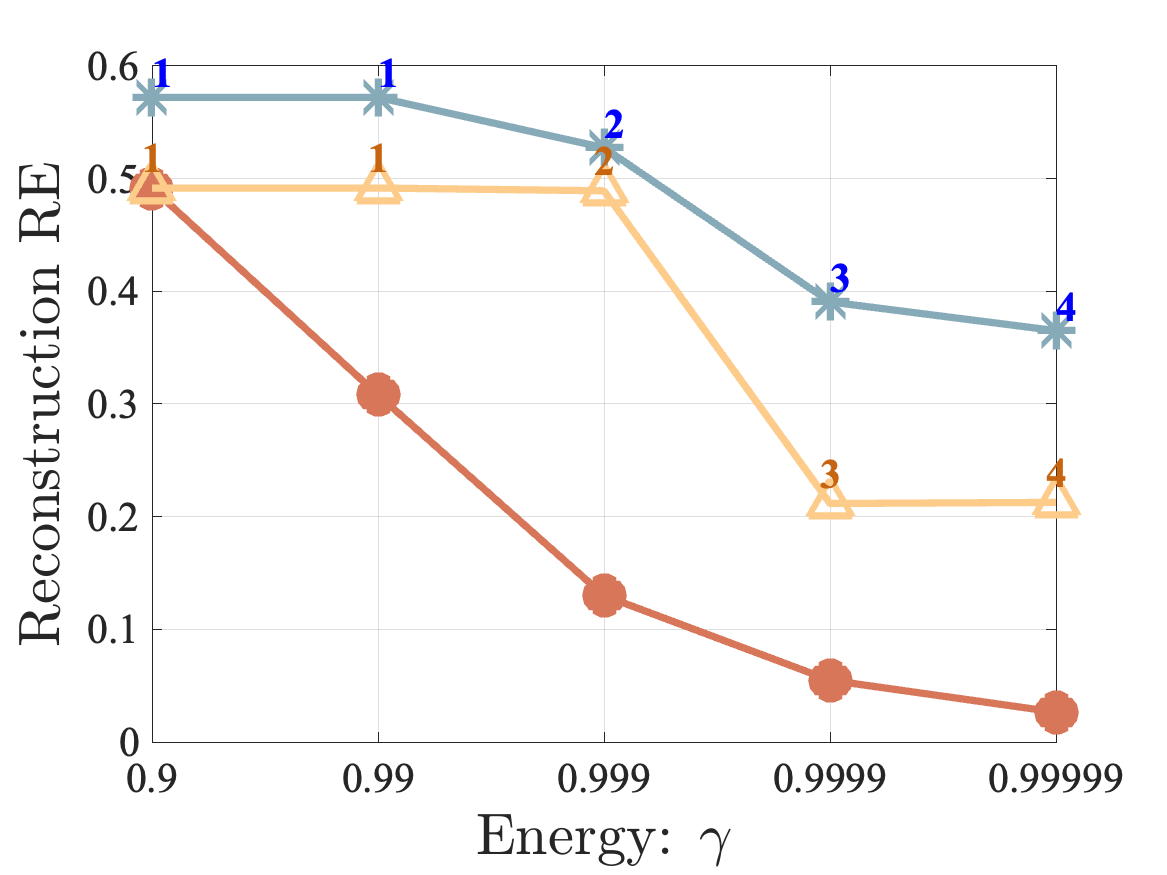}};
\node at (4.15, 3.4) {\includegraphics[width=0.33\linewidth]{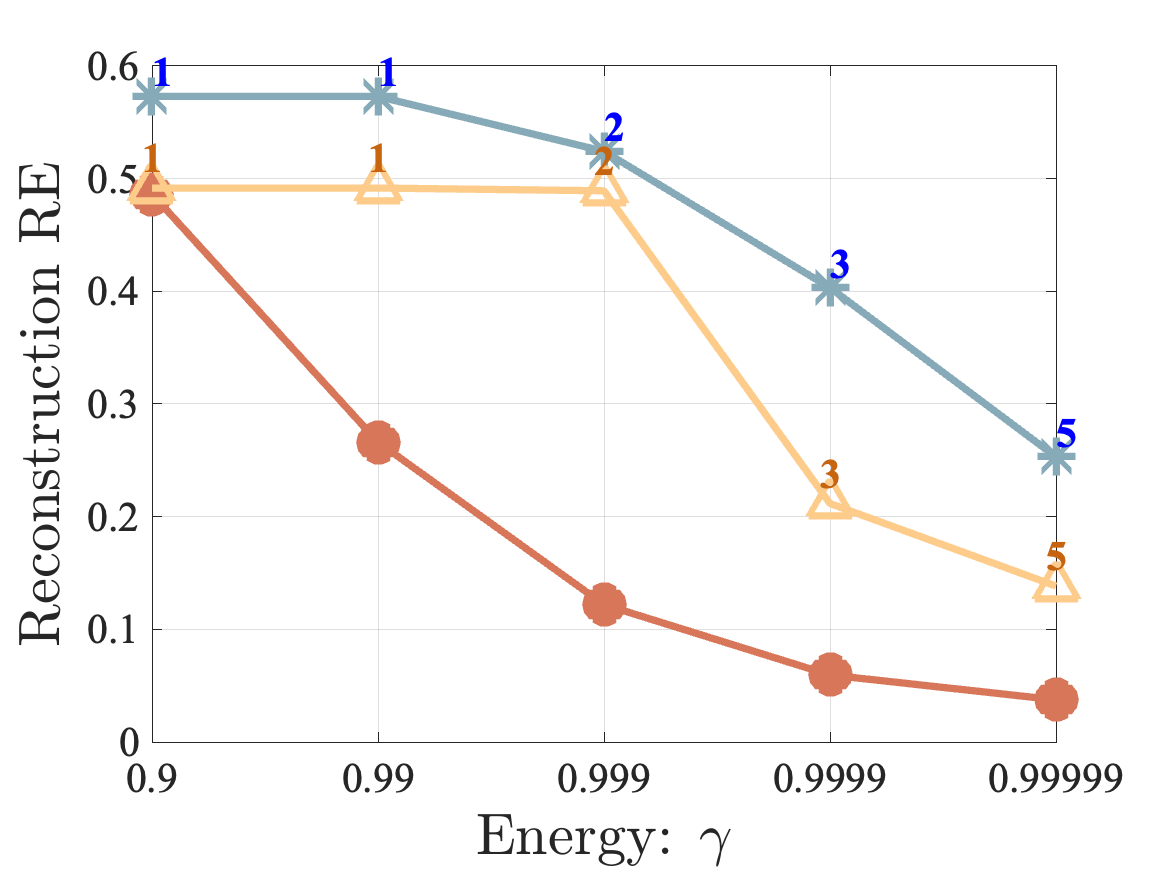}};
\node at (8.3, 3.4) {\includegraphics[width=0.33\linewidth]{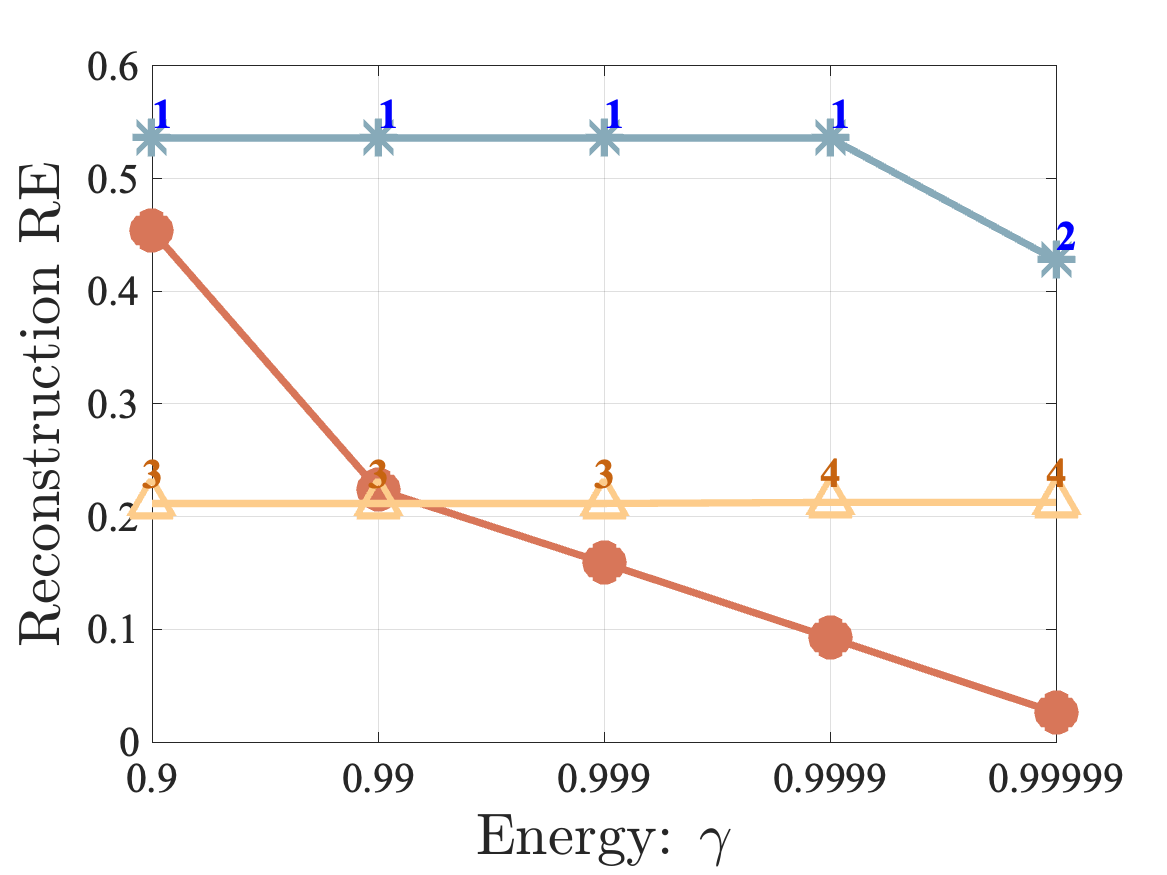}};

\node at (0, 0) {\includegraphics[width=0.33\linewidth]{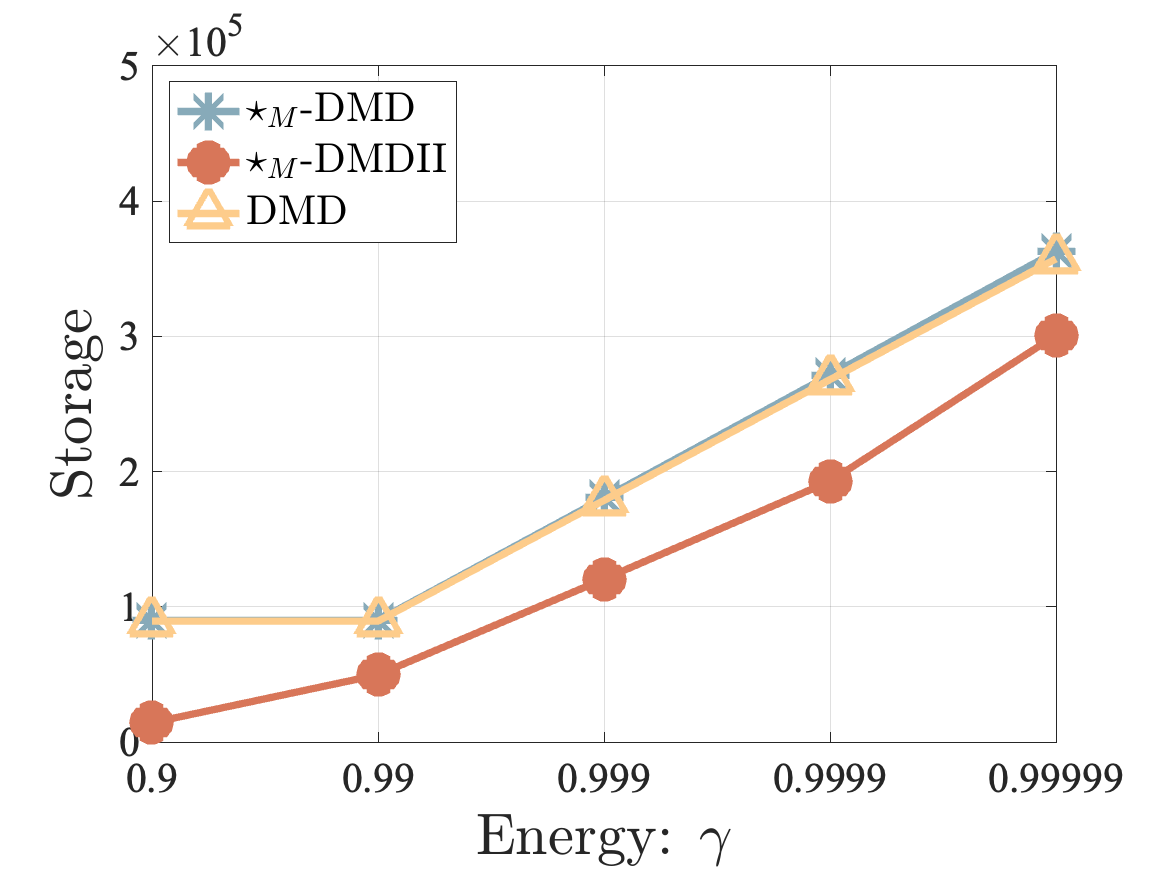}};
\node at (4.15, 0) {\includegraphics[width=0.33\linewidth]{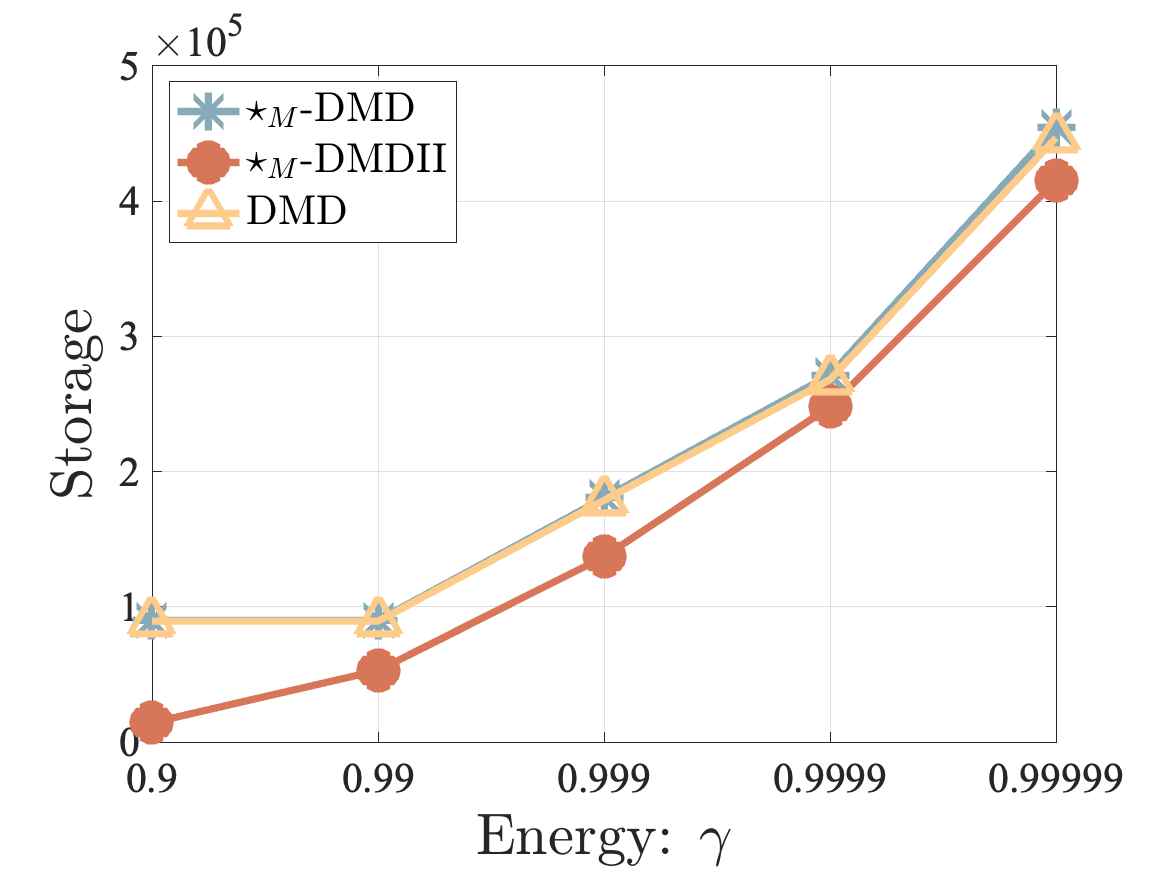}};
\node at (8.3, 0) {\includegraphics[width=0.33\linewidth]{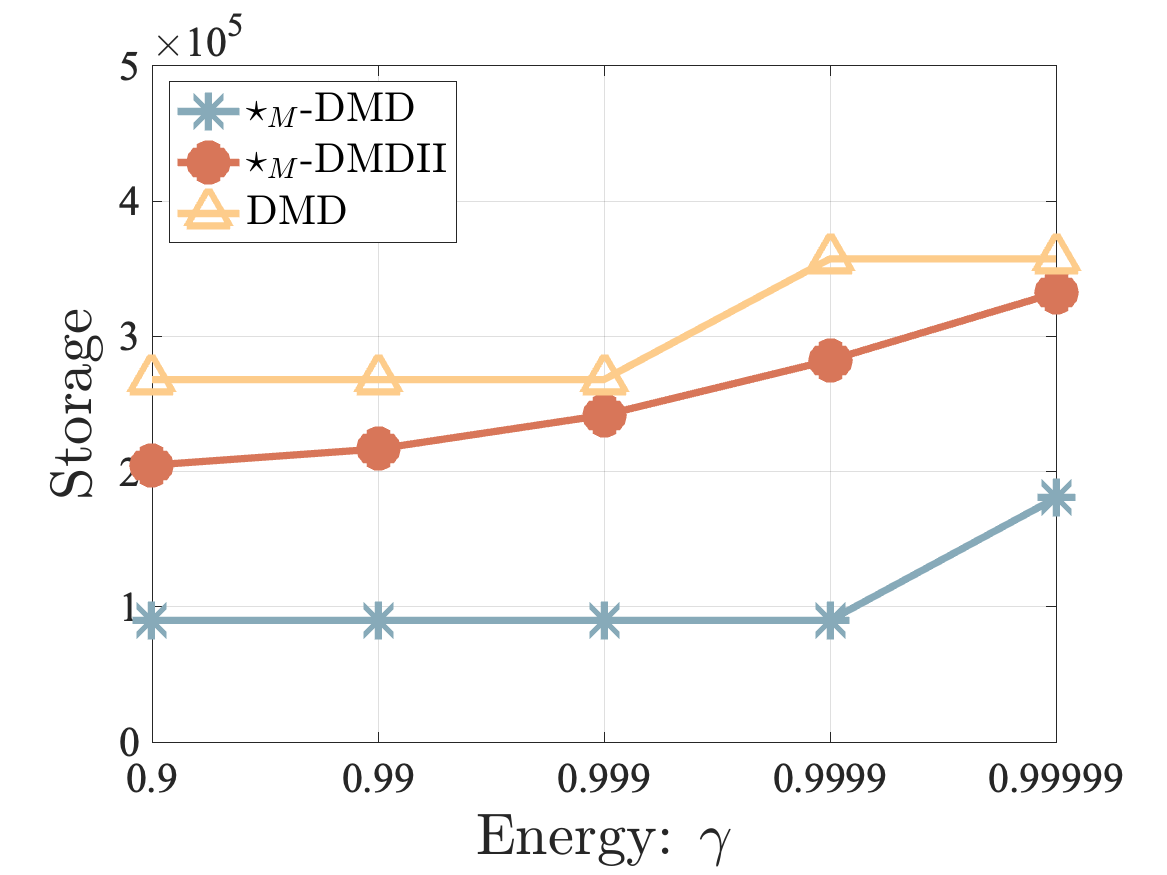}};
\node at (0, 5.2){\small $\B{M} = \text{DCT}$};
\node at (4.15, 5.2){\small $\B{M} = \text{DST}$};
\node at (8.3, 5.2){\small $\B{M} = \text{Data-driven}$};
\end{tikzpicture}
\caption{Performance comparison of the standard DMD, the $\starM$-DMD, and the $\starM$-DMDII methods on the cylinder data.}
\label{fig:cylinder}
\end{figure}

 Figure~\ref{fig:cylinder} compares the reconstruction results on the 2D cylinder flow dataset using the DMD, the $\starM$-DMD, and the $\starM$-DMDII methods. These reconstructions are performed at different energy values of $\gamma$ for the $\starM$-DMDII method from $0.9$ to $0.99999$. At these energy levels, we computed the required storage for the $\starM$-DMDII method first and then computed the corresponding ranks for the DMD and the $\starM$-DMD methods (labeled in yellow and blue, respectively) so that the three methods are compared at an equivalent level of storage. The experiments are repeated for the three different $\B{M}$ matrices used for the $\starM$-DMD and the $\starM$-DMDII methods. 

For all three choices of $\B{M}$, we can see that as the energy level or rank increases, the reconstruction RE decreases and the required storage increases for all three methods. In particular, the $\starM$-DMDII method results in a much lower reconstruction RE compared to the DMD and the $\starM$-DMD method while requiring a lower storage cost. 

\begin{figure}[!ht]
\centering
\begin{tikzpicture}
\node at (0,0) {\includegraphics[width=0.65\linewidth]{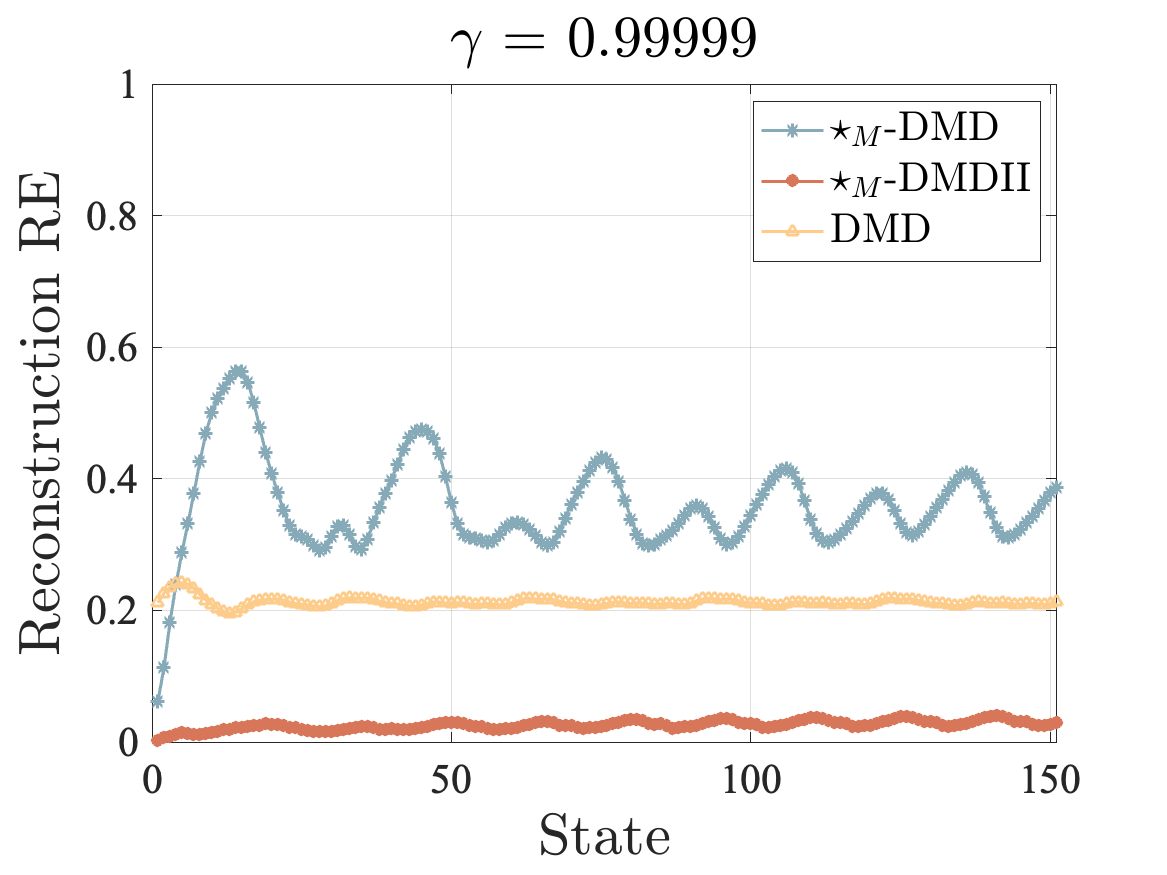}};
\end{tikzpicture}
\caption{State-wise reconstruction RE comparison of the three methods. The DMD $\text{rank}=4$, the $\starM$-DMD $\text{rank}=4$, and the $\starM$-DMDII energy $\gamma = 0.99999$. The orthogonal matrix $\B{M}$ is chosen to be the DCT matrix for the tensor-based methods.}
\label{fig:cyliner_state}
\end{figure}

 Figure~\ref{fig:cyliner_state} compares the reconstruction results of three methods at each state for a total of 150 snapshots. The DMD rank and the $\starM$-DMD rank are set to be $4$, and the $\starM$-DMDII energy is set to be $\gamma=0.99999$. Once again, the three methods have comparable storage costs. The transformation matrix $\B{M}$ is chosen to be the DCT matrix.

As we can see from the figure, the $\starM$-DMDII has much lower relative error RE overall. The  $\starM$-DMD reconstruction result has a smaller RE initially compared to the DMD reconstruction result, but is surpassed by the $\starM$-DMD method after a few states. Figure~\ref{fig:cyliner_snap} shows the flow vorticity plot of the first snapshots of the original data, the DMD reconstructed result, the $\starM$-DMD reconstructed result, and the $\starM$-DMDII reconstructed result. As can be seen from  the plots, although both the DMD and the $\starM$-DMD reconstructed results visually approximate the original flow data well, the $\starM$-DMDII reconstruction appears to recover the original flow data more accurately. 

\begin{figure}[!ht]
\centering
\begin{tikzpicture}
\node at (0,0) {\includegraphics[width=0.95\linewidth]{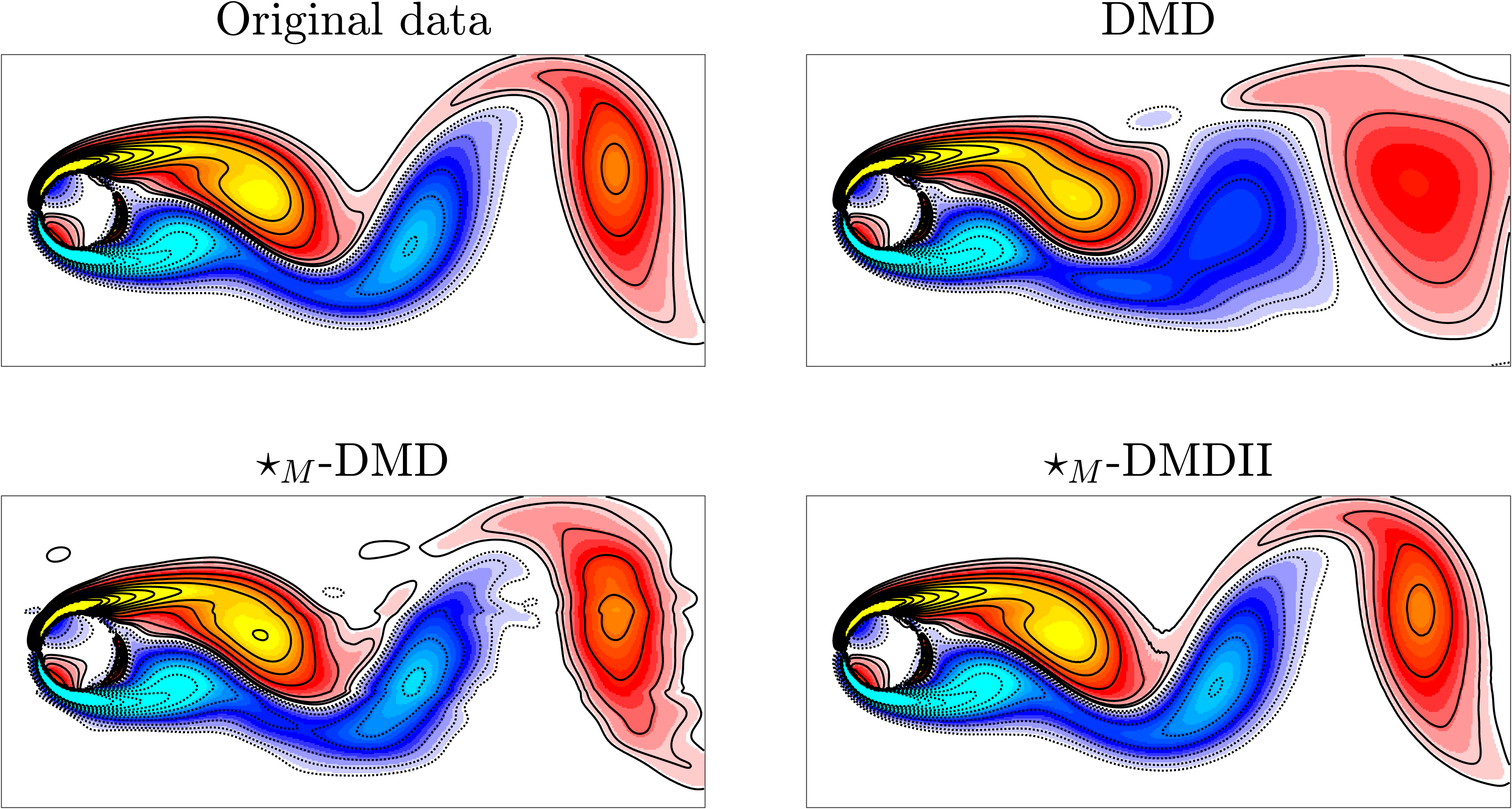}};
\end{tikzpicture}
\caption{Flow vorticity plot of the first snapshots of the original data, the DMD reconstructed result, the $\starM$-DMD reconstructed result, and the $\starM$-DMDII reconstructed result. The DMD $\text{rank}=4$, the $\starM$-DMD $\text{rank}=4$, and the $\starM$-DMDII energy $\gamma = 0.99999$. The orthogonal matrix $\B{M}$ is chosen to be the DCT matrix for the tensor-based methods.}
\label{fig:cyliner_snap}
\end{figure}

\subsection{Sea-surface temperature}
The NOAA Optimum Interpolation (OI) sea-surface temperature (SST) V2 dataset is publicly available \cite{reynolds2002improved}. The dataset consists of weekly temperature measurements from 1990 to 2016 on a grid of size $1^\circ \times 1^\circ$. In total, there are $1727$ snapshots each of size $360 \times 180$. We take the first $200$ snapshots and form a data tensor of size $360\times 200 \times 180$ where the snapshots form the lateral slices of the tensor. 

\begin{figure}[!ht]
\centering
\begin{tikzpicture}
\node at (0, 3.4) {\includegraphics[width=0.33\linewidth]{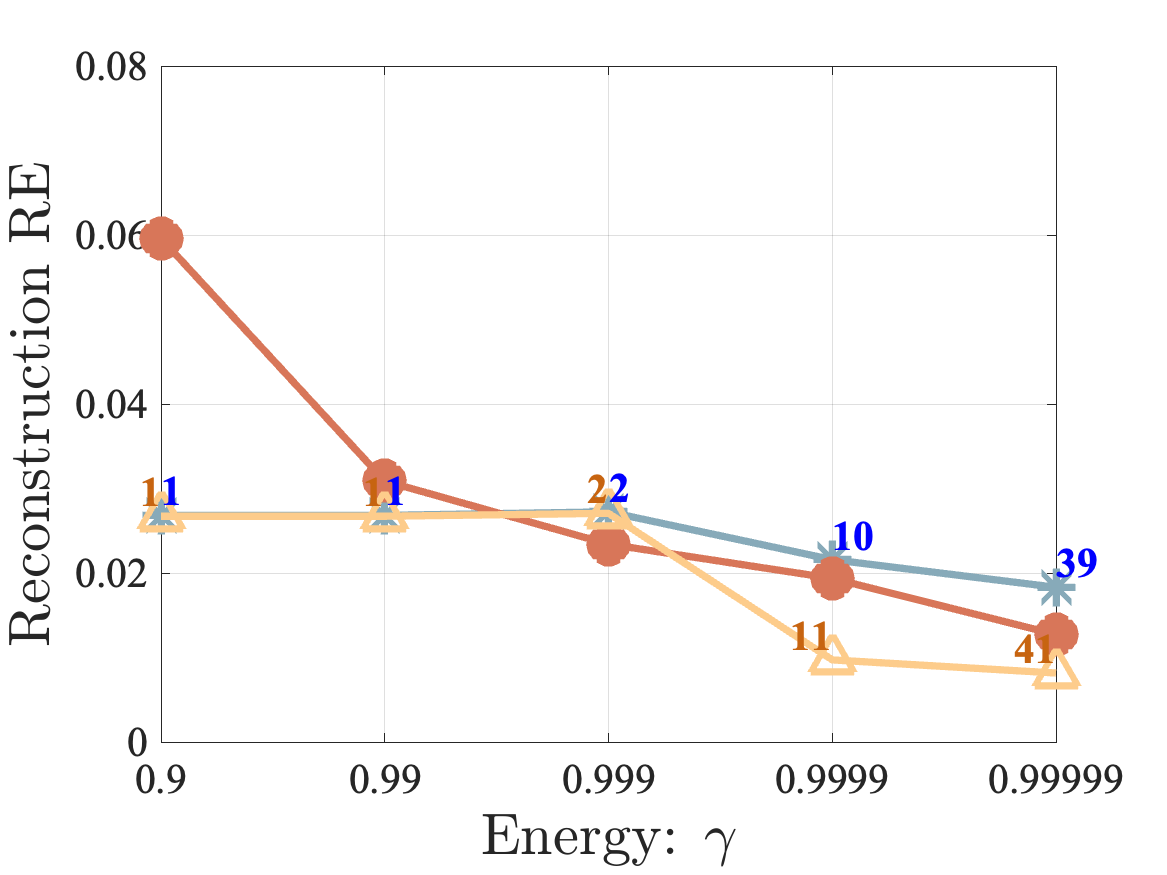}};
\node at (4.15, 3.4) {\includegraphics[width=0.33\linewidth]{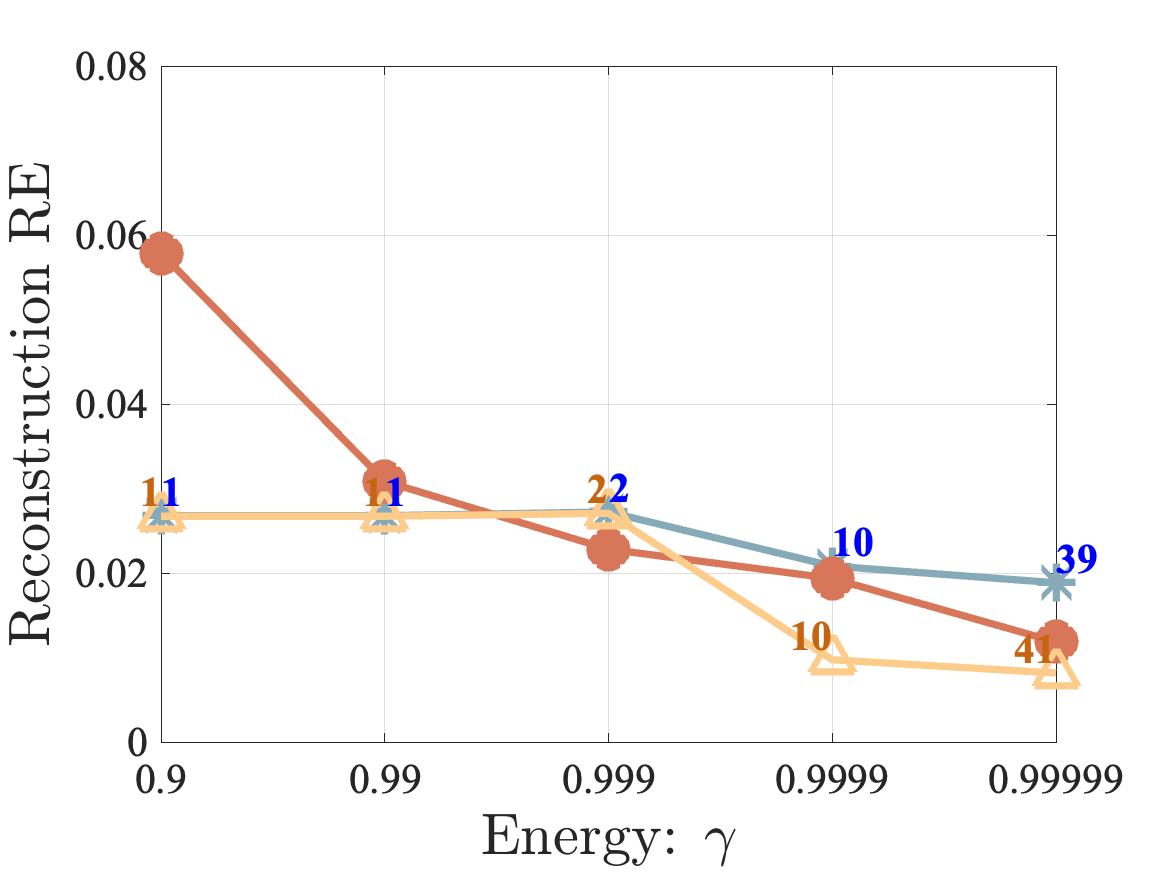}};
\node at (8.3, 3.4) {\includegraphics[width=0.33\linewidth]{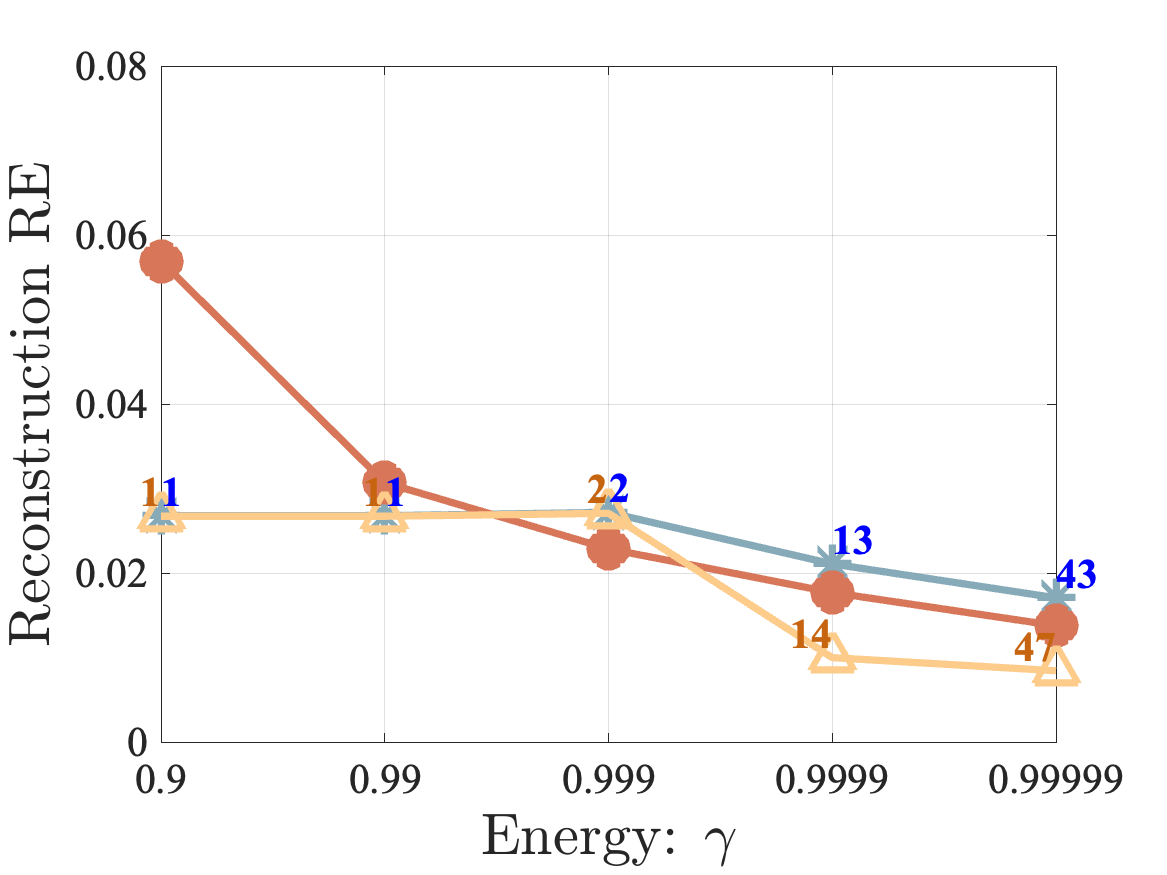}};

\node at (0,0) {\includegraphics[width=0.33\linewidth]{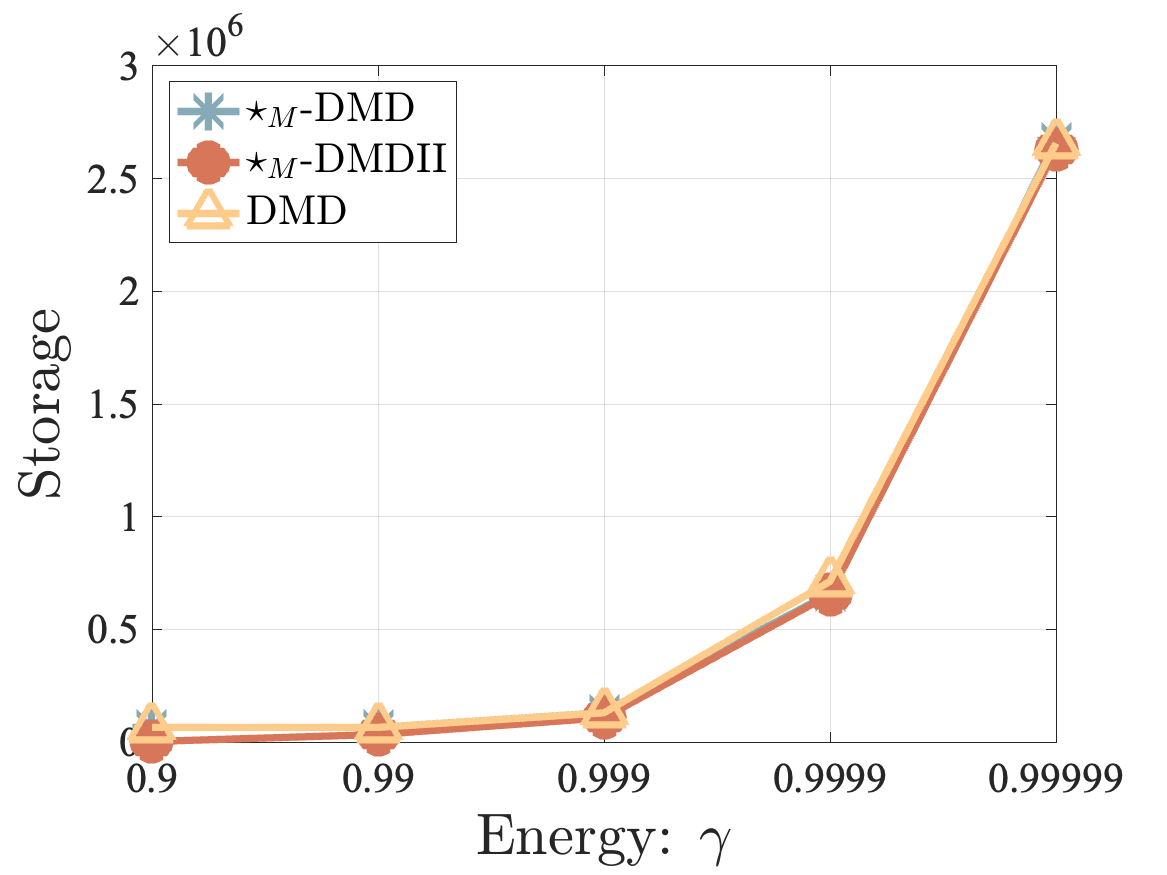}};
\node at (4.15,0) {\includegraphics[width=0.33\linewidth]{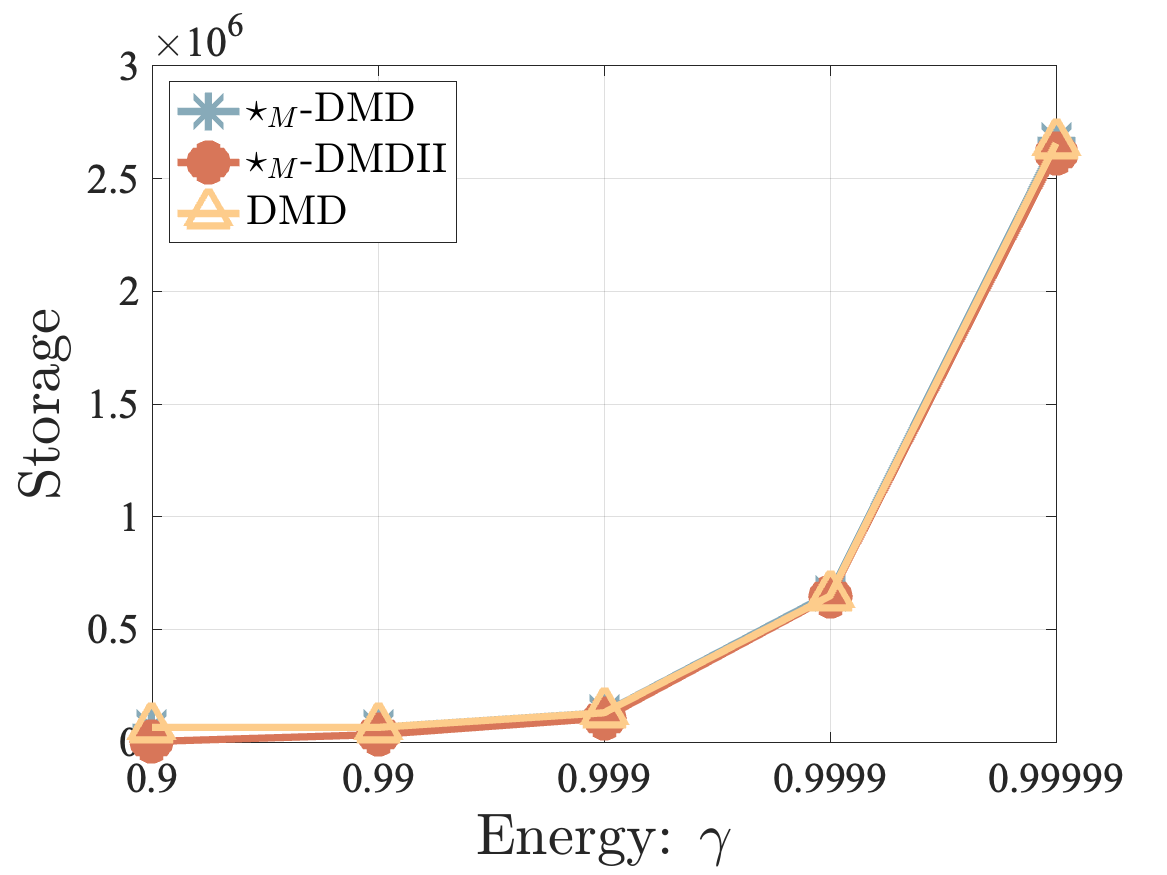}};
\node at (8.3,0) {\includegraphics[width=0.33\linewidth]{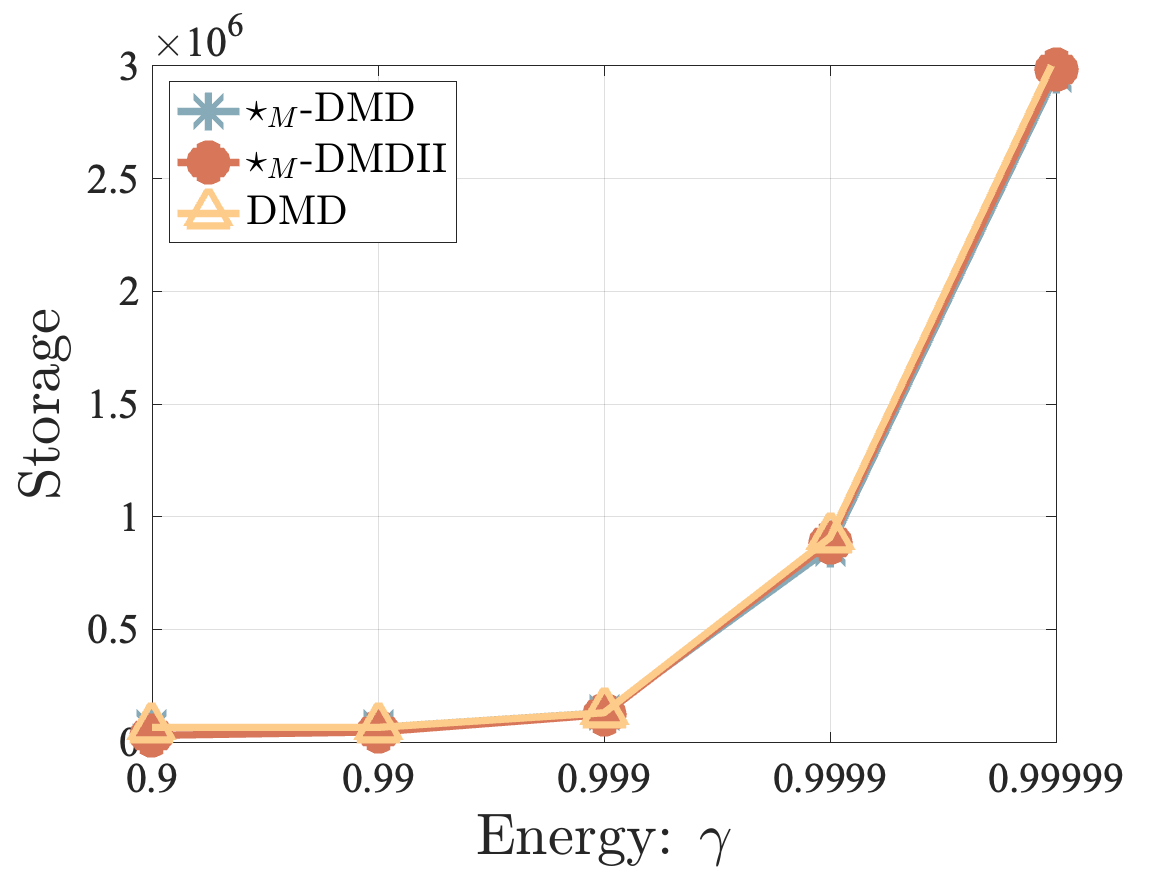}};
\node at (0, 5.2){\small $\B{M} = \text{DCT}$};
\node at (4.15, 5.2){\small $\B{M} = \text{DST}$};
\node at (8.3, 5.2){\small $\B{M} = \text{Data-driven}$};
\end{tikzpicture}
\caption{Performance comparison of the standard DMD, the $\starM$-DMD, and the $\starM$-DMDII methods on the SST dataset.}
\label{fig:sst_re_storage}
\end{figure}

Figure~\ref{fig:sst_re_storage} compares the reconstruction results of the three methods on the SST dataset. As before, we perform the reconstructions at different energy values of $\gamma$ for the $\starM$-DMDII method and determine the corresponding DMD rank and the $\starM$-DMD rank, so that all the methods have the same storage costs. We once again consider three different $\B{M}$ matrices: the DCT matrix, the normalized DST matrix, and the data-driven matrix. 

As we can see from this figure, the DMD reconstructed results have much smaller RE compared to the $\starM$-DMD method with $\text{rank}=1$ and the $\starM$-DMDII method with $\gamma=0.9$. For higher $\gamma$ values, the reconstruction results of the three methods are comparable, with the DMD method most accurate, followed by $\starM-$DMDII. We emphasize here that while the DMD method has a slight edge over $\starM-$DMDII, the latter is more computationally efficient, especially in a parallel environment.  

\begin{figure}[!ht]
\centering
\begin{tikzpicture}
\node at (0,0) {\includegraphics[width=0.65\linewidth]{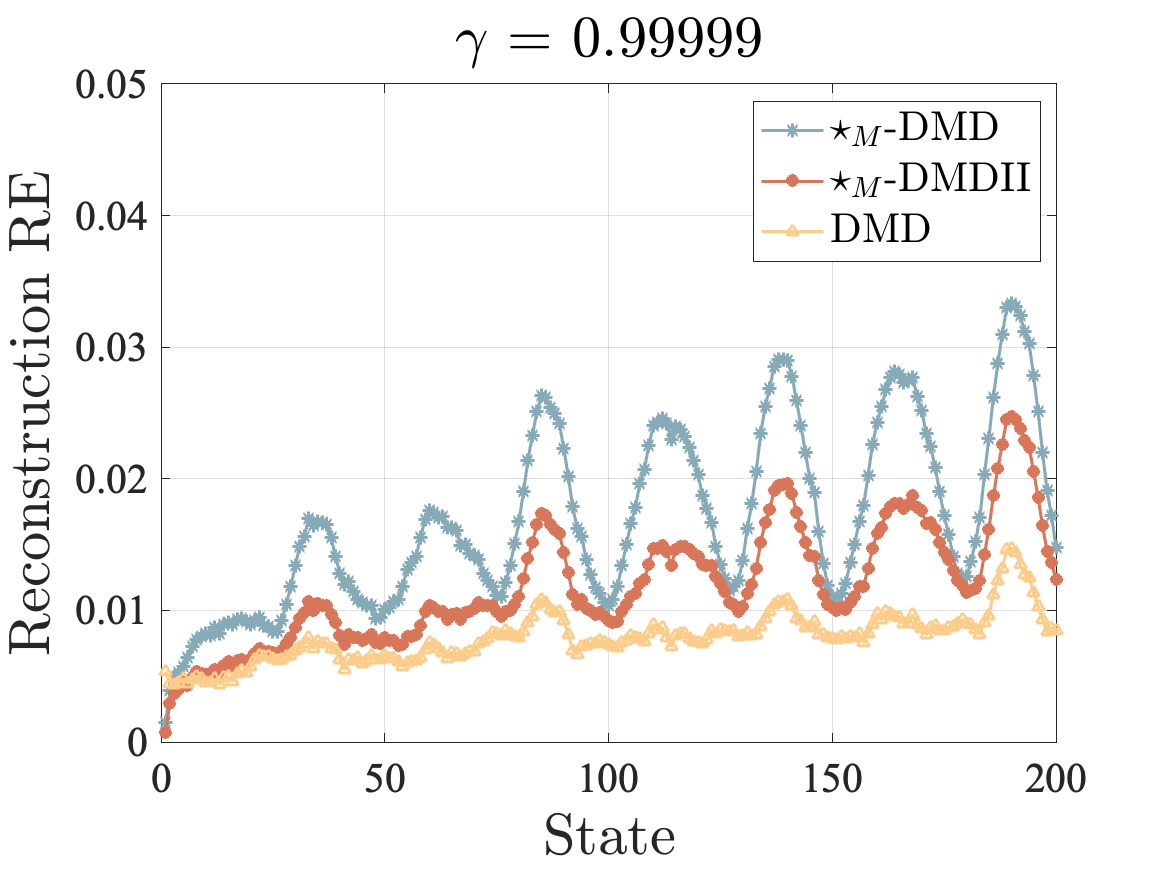}};
\end{tikzpicture}
\caption{State-wise reconstruction RE comparison of the three methods. The DMD $\text{rank}=41$, the $\starM$-DMD $\text{rank}=39$, and the $\starM$-DMDII energy $\gamma = 0.99999$. The orthogonal matrix $\B{M}$ is chosen to be the DCT matrix for the tensor-based methods.}
\label{fig:sst_state}
\end{figure}

\begin{figure}[ht]
\centering
\begin{tikzpicture}
\node at (0,0) {\includegraphics[width=0.95\linewidth]{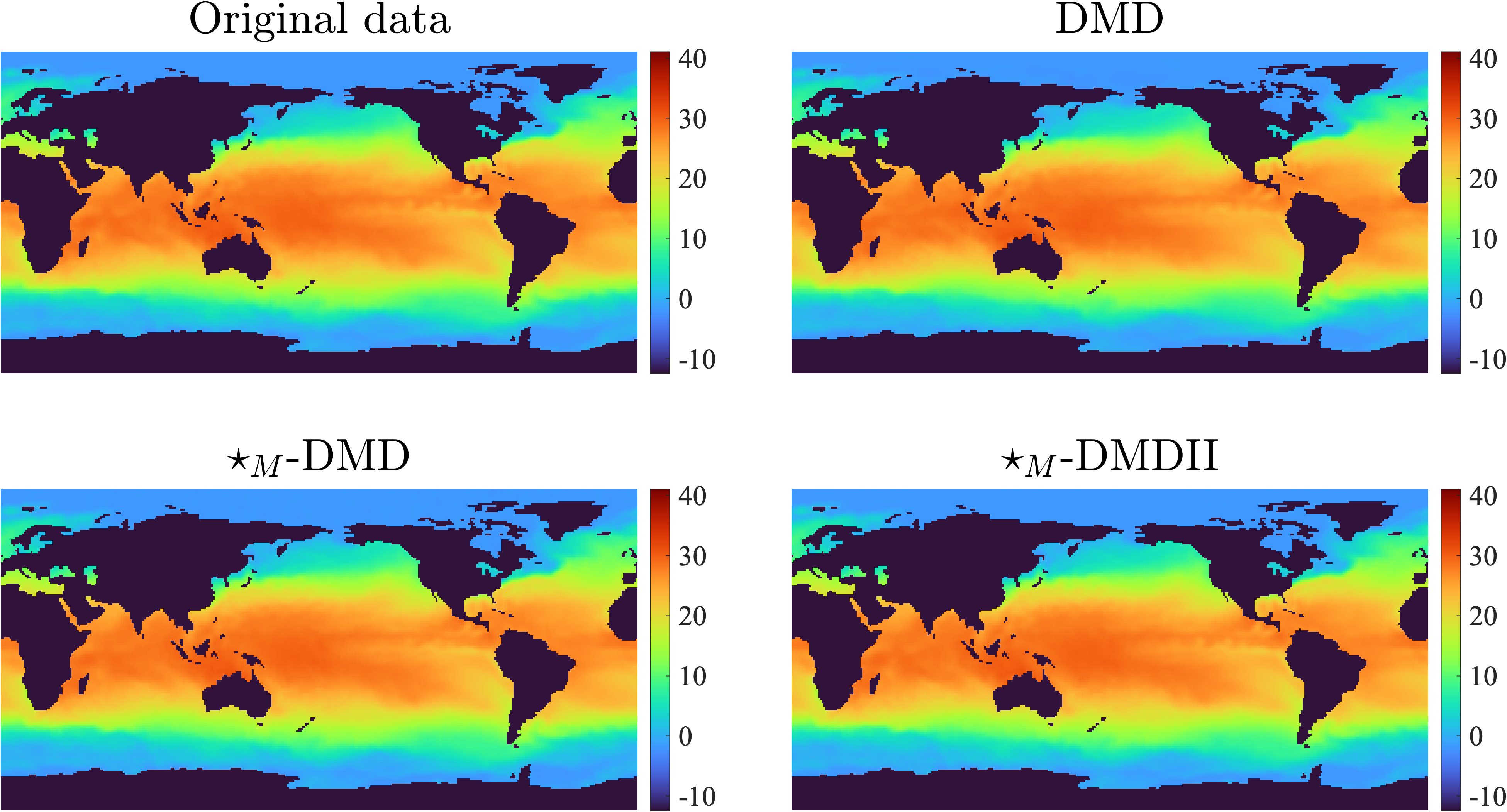}};
\end{tikzpicture}
\caption{The sea surface temperature maps of the first snapshot of the original data, the DMD reconstructed result, the $\starM$-DMD reconstructed result, and the $\starM$-DMDII reconstructed result. The DMD $\text{rank}=41$, the $\starM$-DMD $\text{rank}=39$, and the $\starM$-DMDII energy $\gamma = 0.99999$. The orthogonal matrix $\B{M}$ is chosen to be the DCT matrix for the tensor-based methods.}
\label{fig:sstmap}
\end{figure}

Figure~\ref{fig:sst_state} shows the state-wise reconstruction relative error of the SST dataset. The parameters are chosen as follows: the DMD $\text{rank}=41$, the $\starM$-DMD $\text{rank}=39$, and the $\starM$-DMDII energy $\gamma = 0.99999$. The orthogonal matrix $\B{M}$ is chosen to be the DCT matrix for the tensor-based methods. Overall, the DMD method performs better than the other two methods for this dataset, except that at the initial states, the $\starM$-DMDII method has the smallest reconstruction RE. Figure~\ref{fig:sstmap} shows the first snapshot of the original SST dataset and the reconstructed data by the three methods. As shown in the plots, the reconstructed results obtained by the three methods are comparable and well approximate the original data. 

\subsection{Streaming on Cylinder Flow} The second cylinder flow dataset can be accessed publicly\footnote{Cylinder Flow with von Karman Vortex Sheet: \url{https://cgl.ethz.ch/research/visualization/data.php}}. This dataset simulates a two-dimensional viscous flow past a cylinder. The fluid is introduced from the left side of a channel, which is bounded by solid walls subject to slip boundary conditions. The first 200 snapshots were used for this experiment. Each snapshot is of size $640\times 80$. The dataset was divided into $b = 20$ batches, with each batch consisting of $10$ consecutive snapshots. For comparison, we consider a streaming version of DMD. 

We used the same parameter $\rho_{\max}$ for both the streaming matrix DMD and $\starM$-DMDII. Both the streaming algorithms have nearly identical storage costs, in terms of the sketches and the random matrices. The DCT matrix is used in the $\starM$-computations.

Figure~\ref{fig:stream_DMD} (a) compares the batch reconstruction results of the streaming DMD method and the streaming $\starM$-DMDII method. That is, we process each batch sequentially as in the loop determined by Lines 4-11 in Algorithm~\ref{alg:starMsteaming} and determine the reconstruction $\widetilde{\T{C}}_j$ for each batch $1 \le j\le 20$. We then compute the relative reconstruction error $\|\T{C}_j - \widetilde{\T{C}}_j\|_F/ \|\T{C}_j\|_F$.  Figure~\ref{fig:stream_DMD} (b) shows the state-wise reconstruction RE of the two methods, after all the batches have been processed. As we can see from the figures, the streaming $\starM$-DMDII outperforms the streaming DMD method for this dataset in terms of relative error. 

\begin{figure}[ht]
\centering
\begin{tikzpicture}
\node at (0,0) {\includegraphics[width=0.45\linewidth]{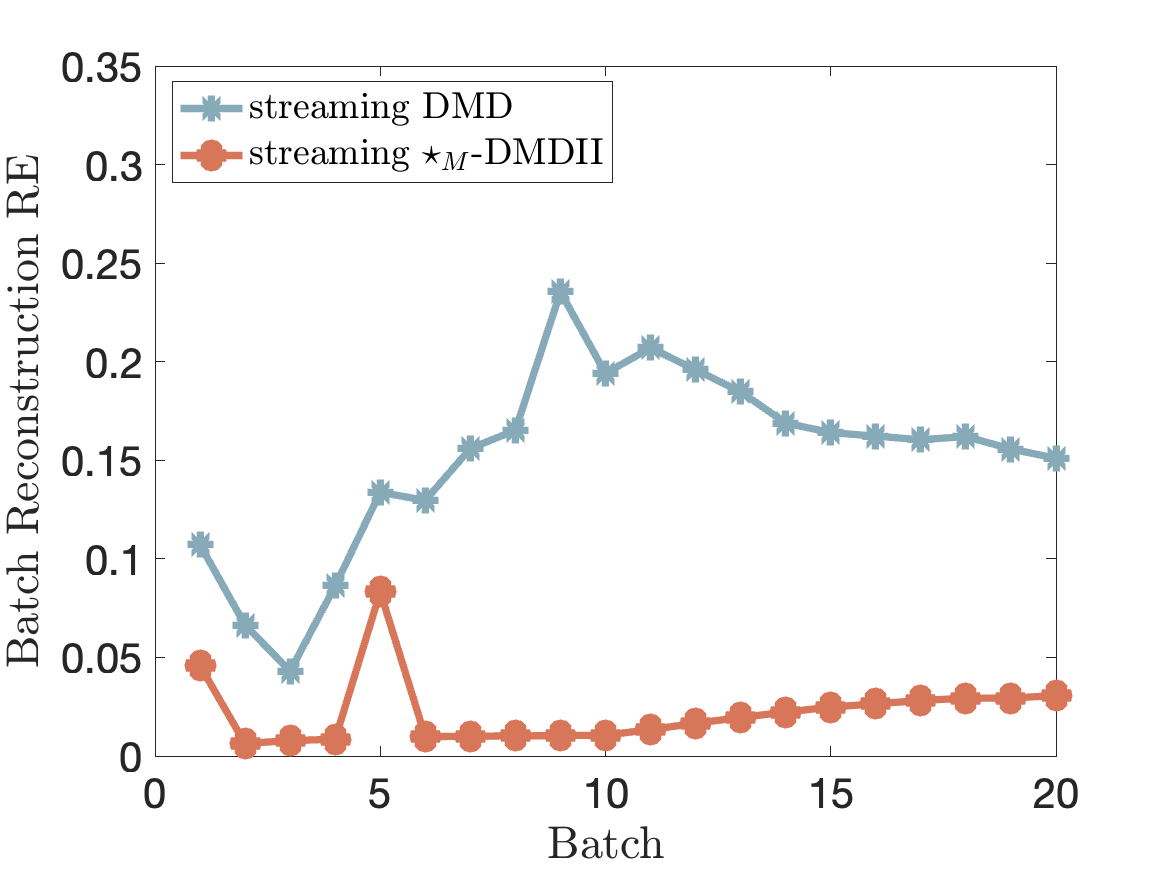}};
\node at (6,0) {\includegraphics[width=0.45\linewidth]{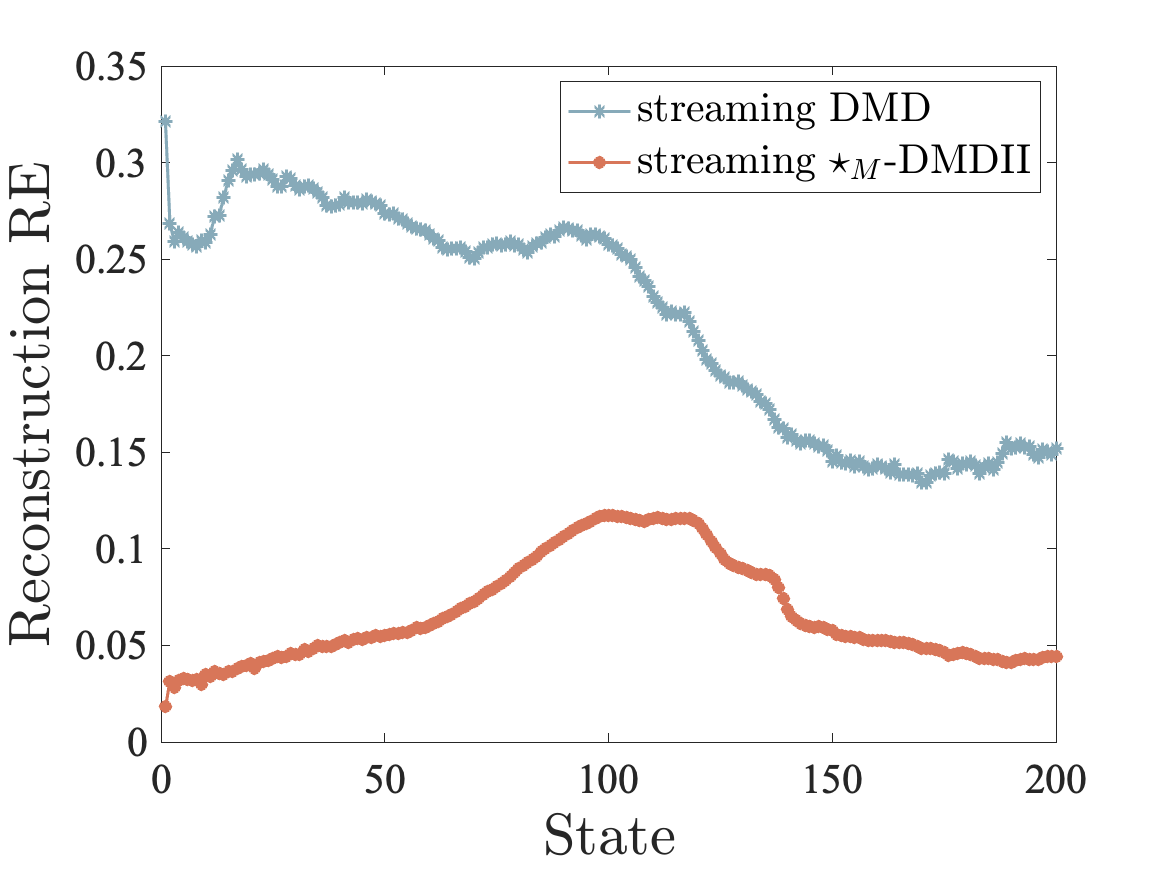}};
\node at (-2.7,2.2) {\small (a)};
\node at (3.1,2.2) {\small (b)};
\end{tikzpicture}
\caption{DMD $\rho_{\max}=20$, $\starM$-DMDII energy $\gamma = 0.99999$. $\B{M}=\text{DCT}$.}
\label{fig:stream_DMD}
\end{figure}

Figure~\ref{fig:stream_snapshots} shows the flow vorticity plots of the first (left column) and the $50$th (right column) snapshots of the original data, the streaming DMD reconstructed result, and the streaming $\starM$-DMDII reconstructed result. As we can see from the plots, the streaming $\starM$-DMDII method shows more accurate reconstructions of the original dataset compared to the streaming DMD method. 

\begin{figure}[ht]
\centering
\begin{tikzpicture}
\node at (-3.2, 2.8) {Snapshot $1$};
\node at (3.2, 2.8) {Snapshot $50$};
\node at (0,0) {\includegraphics[width=0.95\linewidth]{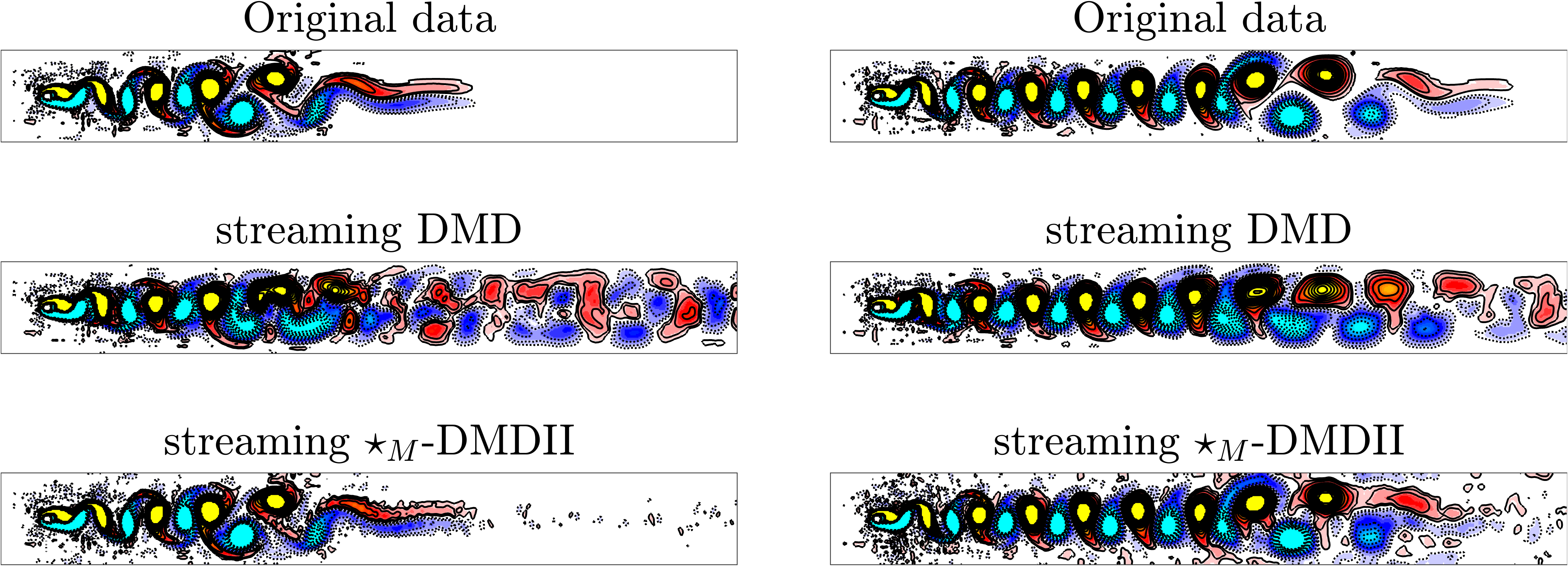}};
\end{tikzpicture}
\caption{Data snapshots: DMD $\rho_{\max}=9$, $\starM$-DMDII energy $\gamma = 0.99999$. $\B{M}=\text{DCT}$.}
\label{fig:stream_snapshots}
\end{figure}

\section{Conclusion and discussion}\label{sec:conc}
This paper presents a new tensor-based approach for DMD based on the $\starM$-product. The method exploits the multidimensional nature of the snapshots. A computational framework for the $\starM$-DMD is given which is attractive from a computational point of view since it can exploit inherent parallelism in the $\starM$-framework. The approach can be interpreted from the regression viewpoint of DMD, as a form of regression constrained over a matrix subspace defined by the $\starM$-product. Thus, this is similar to the physics-informed DMD approach as well. We also consider the streaming setting, where the snapshots arrive sequentially in batches and give efficient randomized algorithms for updating the $\starM$-DMD in this setting. Numerical experiments on several flow-based datasets show that the $\starM$-DMD have comparable or greater accuracy compared to the standard DMD approach, when the storage cost of the methods is held equal. 

There are several avenues for future work. While the experiments focused on flow problems in two spatial dimensions, the framework extends readily to three spatial dimensions. One straightforward approach for three-spatial dimensions with $N=n_xn_yn_z$ grid points, is to take in the notation of Section~\ref{ssec:tenreg} $m= n_x$, $n = n_yn_z$, and $p = T$. For $d= 3$ spatial dimensions, we also take $\B{M}$ as a Kronecker product of the form $\B{M} = \B{M}_Y \otimes \B{M}_Z$. See~\cite{kilmer2021tensor} for details on extensions. We intend to explore the computational issues associated with $\starM$-DMD involving three spatial dimensions in future work. There are several variants of the DMD that can be viewed from the Galerkin perspective (e.g., extended DMD)~\cite[Section 2.2.2]{colbrook2023multiverse}. It would be interesting to explore the connections between these approaches and the $\starM$-DMD approach. Finally, it may be worth exploring optimal choices for the matrix $\B{M}$ that defines the $\starM$-product. A possible way forward is the approach in~\cite{newman2025optimal}.

\section{Acknowledgements}
AKS would like to thank Mohammad Farazmand for helpful conversations on the formulation and the numerical results.

\bibliography{refs}

\begin{thebibliography}{10}

\bibitem{ahmed_dynamic_2025}
S.~E. Ahmed, P.~H. Dabaghian, O.~San, D.~A. Bistrian, and I.~M. Navon.
\newblock Dynamic mode decomposition with core sketch.
\newblock {\em Physics of Fluids}, 2025.

\bibitem{baddoo2023physics}
P.~J. Baddoo, B.~Herrmann, B.~J. McKeon, J.~Nathan~Kutz, and S.~L. Brunton.
\newblock Physics-informed dynamic mode decomposition.
\newblock {\em Proceedings of the Royal Society A}, 479(2271):20220576, 2023.

\bibitem{ballard2025tensor}
G.~Ballard and T.~G. Kolda.
\newblock {\em Tensor Decompositions for Data Science}.
\newblock Cambridge University Press, 2025.

\bibitem{bjorck2015numerical}
A.~k. Bj\"orck.
\newblock {\em Numerical methods in matrix computations}, volume~59 of {\em Texts in Applied Mathematics}.
\newblock Springer, Cham, 2015.

\bibitem{colbrook2023multiverse}
M.~J. Colbrook.
\newblock Chapter 4 - the multiverse of dynamic mode decomposition algorithms.
\newblock In S.~Mishra and A.~Townsend, editors, {\em Numerical Analysis Meets Machine Learning}, volume~25 of {\em Handbook of Numerical Analysis}, pages 127--230. Elsevier, 2024.

\bibitem{drmavc2023data}
Z.~Drma{\v{c}} and I.~Mezi{\'c}.
\newblock A data driven {K}oopman-{S}chur decomposition for computational analysis of nonlinear dynamics.
\newblock {\em arXiv preprint arXiv:2312.15837}, 2023.

\bibitem{erichson2019randomized}
N.~B. Erichson, L.~Mathelin, J.~N. Kutz, and S.~L. Brunton.
\newblock Randomized dynamic mode decomposition.
\newblock {\em SIAM Journal on Applied Dynamical Systems}, 18(4):1867--1891, 2019.

\bibitem{golub2012matrix}
G.~H. Golub and C.~F. Van~Loan.
\newblock {\em Matrix computations}.
\newblock Johns Hopkins Studies in the Mathematical Sciences. Johns Hopkins University Press, Baltimore, MD, fourth edition, 2013.

\bibitem{Guenther17}
T.~G{\"u}nther, M.~Gross, and H.~Theisel.
\newblock Generic objective vortices for flow visualization.
\newblock {\em ACM Transactions on Graphics (Proc. SIGGRAPH)}, 36(4):141:1--141:11, 2017.

\bibitem{he2025tensor}
Z.~He, M.~Hu, Y.~Lou, and C.~Chen.
\newblock Tensor dynamic mode decomposition.
\newblock {\em arXiv preprint arXiv:2508.02627}, 2025.

\bibitem{hemati_dynamic_2014}
M.~S. Hemati, M.~O. Williams, and C.~W. Rowley.
\newblock Dynamic mode decomposition for large and streaming datasets.
\newblock {\em Physics of Fluids}, 26(11):111701, Nov. 2014.

\bibitem{jovanovic2014sparsity}
M.~R. Jovanovi{\'c}, P.~J. Schmid, and J.~W. Nichols.
\newblock Sparsity-promoting dynamic mode decomposition.
\newblock {\em Physics of Fluids}, 26(2), 2014.

\bibitem{kernfeld2015tensor}
E.~Kernfeld, M.~Kilmer, and S.~Aeron.
\newblock Tensor--tensor products with invertible linear transforms.
\newblock {\em Linear Algebra and its Applications}, 485:545--570, 2015.

\bibitem{kilmer2021tensor}
M.~E. Kilmer, L.~Horesh, H.~Avron, and E.~Newman.
\newblock Tensor-tensor algebra for optimal representation and compression of multiway data.
\newblock {\em Proceedings of the National Academy of Sciences}, 118(28):e2015851118, 2021.

\bibitem{kilmer2011factorization}
M.~E. Kilmer and C.~D. Martin.
\newblock Factorization strategies for third-order tensors.
\newblock {\em Linear Algebra and its Applications}, 435(3):641--658, 2011.

\bibitem{klus2018tensor}
S.~Klus, P.~Gel{\ss}, S.~Peitz, and C.~Sch{\"u}tte.
\newblock Tensor-based dynamic mode decomposition.
\newblock {\em Nonlinearity}, 31(7):3359, 2018.

\bibitem{kutz2016dynamic}
J.~N. Kutz, S.~L. Brunton, B.~W. Brunton, and J.~L. Proctor.
\newblock {\em Dynamic mode decomposition}.
\newblock Society for Industrial and Applied Mathematics (SIAM), Philadelphia, PA, 2016.
\newblock Data-driven modeling of complex systems.

\bibitem{newman2025optimal}
E.~Newman and K.~Keegan.
\newblock Optimal matrix-mimetic tensor algebras via variable projection.
\newblock {\em SIAM Journal on Matrix Analysis and Applications}, 46(3):1764--1790, 2025.

\bibitem{gerrisflowsolver}
S.~Popinet.
\newblock Free computational fluid dynamics.
\newblock {\em ClusterWorld}, 2(6), 2004.

\bibitem{reynolds2002improved}
R.~W. Reynolds, N.~A. Rayner, T.~M. Smith, D.~C. Stokes, and W.~Wang.
\newblock An improved in situ and satellite sst analysis for climate.
\newblock {\em Journal of climate}, 15(13):1609--1625, 2002.

\bibitem{saad2011numerical}
Y.~Saad.
\newblock {\em Numerical methods for large eigenvalue problems}, volume~66 of {\em Classics in Applied Mathematics}.
\newblock Society for Industrial and Applied Mathematics (SIAM), Philadelphia, PA, revised edition, 2011.

\bibitem{schmid2010dynamic}
P.~J. Schmid.
\newblock Dynamic mode decomposition of numerical and experimental data.
\newblock {\em Journal of fluid mechanics}, 656:5--28, 2010.

\bibitem{schmid2022dynamic}
P.~J. Schmid.
\newblock Dynamic mode decomposition and its variants.
\newblock {\em Annual Review of Fluid Mechanics}, 54(1):225--254, 2022.

\bibitem{tropp2017practical}
J.~A. Tropp, A.~Yurtsever, M.~Udell, and V.~Cevher.
\newblock Practical sketching algorithms for low-rank matrix approximation.
\newblock {\em SIAM Journal on Matrix Analysis and Applications}, 38(4):1454--1485, 2017.

\bibitem{zhang2019online}
H.~Zhang, C.~W. Rowley, E.~A. Deem, and L.~N. Cattafesta.
\newblock Online dynamic mode decomposition for time-varying systems.
\newblock {\em SIAM Journal on Applied Dynamical Systems}, 18(3):1586--1609, 2019.

\bibitem{zhang2023multivariate}
Q.~Zhang, R.~Yuan, Y.~Lv, Z.~Li, and H.~Wu.
\newblock Multivariate dynamic mode decomposition and its application to bearing fault diagnosis.
\newblock {\em IEEE Sensors Journal}, 23(7):7514--7524, 2023.

\end{thebibliography}
\bibliographystyle{abbrv}
\end{document}